\documentclass[12pt]{article}

\usepackage[top=1.2in,bottom=1.2in,left=1in,right=1in]{geometry}
\usepackage{amsmath,amssymb,amsfonts,amsthm,mathtools}
\usepackage{bm}
\usepackage{graphicx}
\usepackage{booktabs}
\usepackage{caption}
\usepackage{xcolor}
\usepackage{cite}
\usepackage[hidelinks]{hyperref}
\usepackage[capitalise,nameinlink]{cleveref}

\DeclareGraphicsExtensions{.pdf,.png,.jpg,.jpeg,.eps}

\newtheorem{theorem}{Theorem}[section]
\newtheorem{proposition}{Proposition}[section]
\newtheorem{lemma}{Lemma}
\newtheorem{definition}{Definition}

\theoremstyle{remark}
\newtheorem{remark}{Remark}
\numberwithin{figure}{section}
\numberwithin{table}{section}
\numberwithin{equation}{section}

\title{Wave Breaking and Structure-Preserving Fully Discrete Method for Stochastic Camassa--Holm Equation with Additive Noise}
\author{
Liying Sun$^{1}$, Zizhao Sun$^{2}$, and Liying Zhang$^{2}$\\
{\small $^{1}$Academy for Multidisciplinary Studies, Capital Normal University}\\
{\small $^{2}$School of Mathematical Science, China University of Mining and Technology (Beijing)}\\
{\small \texttt{liyingsun@lsec.cc.ac.cn}, \texttt{SQT2400702046@student.cumtb.edu.cn},}\\
{\small \texttt{lyzhang@lsec.cc.ac.cn}}
}
\date{}

\hypersetup{
  pdftitle={Wave Breaking and Structure-Preserving Fully Discrete Method for Stochastic Camassa--Holm Equation with Additive Noise},
  pdfauthor={Liying Sun, Zizhao Sun, and Liying Zhang}
}

\begin{document}

\maketitle

% REQUIRED
\begin{abstract}
In this article, we investigate  the  stochastic Camassa--Holm equation driven by additive noise, which models nonlinear shallow-water waves under random external forcing.  
We establish that, almost surely, any
finite-time breakdown occurs through wave breaking: the wave amplitude and the positive part of the slope remain bounded,
whereas the minimum slope tends to $-\infty$. 
Under an additional moment assumption, we derive an initial-slope
condition ensuring wave breaking with positive probability. We also
formulate the equation as a stochastic Hamiltonian PDE, establish its
stochastic multi-symplectic conservation law, and derive the evolution
law for the averaged $H^1$ energy. 
Combining the Fourier--Galerkin method, 
symplectic Runge--Kutta integration, and
exact solution of the linear stochastic subsystem, we propose a 
fully discrete method which
preserves an integrated discrete stochastic multi-symplectic conservation law and
the linear growth of the  discrete averaged  \(H^1\) energy. 
Under suitable regularity and resolution assumptions, we  present 
that the fully discrete dynamics retain the slope-steepening mechanism
responsible for wave breaking, providing a discrete counterpart of the
continuous behavior.

\par\medskip\noindent\textbf{AMS subject classification:} 60H15, 35B44, 65M70, 65P10.

\par\medskip\noindent\textbf{Key Words:}\\
Stochastic Camassa--Holm equation, additive noise, wave breaking,
stochastic multi-symplectic conservation law, structure-preserving fully discrete method.
\end{abstract}

\section{Introduction}
\label{sec:intro}
The Camassa--Holm (CH) equation is a nonlinear dispersive model for the
unidirectional propagation of long waves in shallow water
\cite{CamassaHolm1993,ConstantinLannes2009}.
Its integrable and Hamiltonian structures have motivated extensive
analytical study, including recent work on conservative
infinite-peakon solutions on the real line via inverse spectral methods
\cite{ChangEckhardtKostenko2025}.
The CH equation also admits solutions that undergo wave
breaking, in which the wave profile remains bounded, whereas its slope
becomes unbounded from below in finite time
\cite{ConstantinEscher1998}. This distinction makes the evolution of the spatial slope a central
issue in both the analysis and numerical approximation of the equation.

In physical wave environments, random external forcing and unresolved small-scale processes motivate stochastic
extensions of the deterministic CH equation. Here, we consider additive noise as an idealized representation
of random external forcing \cite{ChenGaoGuo2012}. 
Periodic boundary conditions provide an idealized setting for the
propagation of wave trains.
For the model considered here, additive noise replaces deterministic
$H^1$-energy conservation with an energy evolution law. 
The resulting averaged energy evolution law does not by itself characterize pathwise
singularity formation, which requires control of the amplitude and
separate analysis of the negative slope.
\iffalse
  Since the deterministic CH wave breaking is characterized by
an unbounded negative slope while the amplitude remains bounded, energy
estimates alone cannot determine whether finite-time breakdown occurs
through this bounded-amplitude wave-breaking mechanism.
The numerical approximation introduces a further difficulty.  A method
may be accurate while the solution remains smooth, yet still modify the
nonlinear instability that drives slope steepening.  In particular,
spatial projection and time discretization may alter the characteristic
slope dynamics near breaking.
\fi
Numerical approximation introduces an additional difficulty: accuracy
on smooth solution intervals does not ensure that the discretization
retains the instability responsible for steepening.  Spatial projection
and time discretization perturb the characteristic slope dynamics,
and their effect near breaking must therefore be controlled.
For the stochastic problem, a suitable
method should also reproduce the associated stochastic energy evolution induced by the additive noise.
Thus, accuracy in the smooth regime, consistency with stochastic energy
evolution, and faithful resolution of the nonlinear breaking mechanism
are related, but distinct, requirements.
Structure-preserving discretization provides a natural framework for
addressing these requirements.

For the deterministic CH equation,  considerable effort
has been devoted to numerical methods that inherit the intrinsic
geometric and conservative structures, including multi-symplectic,
Hamiltonian-conserving, and invariant-preserving discretizations
\cite{CohenOwrenRaynaud2008,Hong2019,LiuXing2016,Matsuo2010}.
Stochastic CH equations with  multiplicative noise
including transport- or convection-type perturbations, and pure-jump
noise have also been studied, with results on well-posedness,
qualitative dynamics, and wave breaking
\cite{AlbeverioBrzezniakDaletskii2021,ChenDuanGao2021,
ChenGaoGuo2012,CrisanHolm2018,Tang2018}.
Comparative numerical studies of stochastic Hamiltonian CH model
have also been carried out \cite{HolmSinghStreet2026}.
In the broader context of geometric numerical integration for
stochastic Hamiltonian PDEs, symplectic Runge--Kutta and
multi-symplectic methods have been developed
\cite{ChenHong2016,ChenHongJi2019,HongJiZhang2014,ZhangJi2019}.
These developments provide important insights into stochastic CH
dynamics and structure-preserving discretization. However, for the
CH equation with additive noise, a systematic understanding of its
intrinsic properties and their preservation under numerical
discretization is still lacking. This naturally raises the following
questions:
\begin{itemize}
    \item \noindent Does the wave-breaking mechanism survive in the case of the additive noise? 
    \item \noindent What are the geometric structure and energy evolution law of the stochastic system?
    \item \noindent How to design a fully discrete method that preserves the intrinsic properties of the original system?
\end{itemize}

The present work addresses these questions through the analysis of the
stochastic dynamics and their fully discrete method. First, we
characterize the finite-time breakdown mechanism of the stochastic CH
equation with additive noise.  Any finite-time loss of regularity is shown to be necessarily
of wave-breaking type: the wave amplitude and the positive part of the
spatial slope remain bounded up to the maximal existence time, whereas
the minimum slope diverges to $-\infty$. Under an additional moment
condition, this characterization yields a sufficient criterion for
finite-time wave breaking with positive probability in terms of a
sufficiently negative initial slope. 
Beyond singularity formation, we identify the geometric and energetic
structures underlying the stochastic dynamics. In particular, we derive
a stochastic multi-symplectic formulation and an evolution law for the
averaged $H^1$ energy, clarifying which structures of the deterministic
dynamics persist under additive stochastic forcing and providing the
basis for the numerical discretization. 
We then construct a fully discrete method combining a Fourier--Galerkin
approximation in space, a symplectic Runge--Kutta discretization of the
deterministic subsystem, and exact integration of the additive stochastic
subsystem. The resulting method satisfies an integrated discrete
multi-symplectic conservation law and preserves the linear-in-time
evolution of the discrete averaged $H^1$ energy. Thus, the principal
geometric and energetic structures of the stochastic CH equation are
retained at the fully discrete level. 
Preservation of these identities alone, however, does not guarantee that
the numerical solution captures the nonlinear mechanism of wave breaking.
We therefore analyze the numerical slope along discrete characteristics.
Under suitable regularity assumptions, its expectation
satisfies a Riccati-type inequality whose leading negative quadratic term
agrees with that of the continuous characteristic-slope equation up to
controlled discretization errors. Consequently, sufficiently negative
discrete slopes are incompatible with a uniform high-regularity bound
beyond the corresponding Riccati comparison time. 
This analysis establishes a quantitative correspondence between the
continuous and discrete wave-breaking mechanisms. 
We prove that the
proposed discretization preserves not only the geometric and energetic
structures, but also the nonlinear steepening mechanism responsible for
wave breaking. %, without requiring convergence of the numerical breaking times or direct approximation of the random singularity time. 
Finally, numerical experiments verify temporal convergence order of the proposed numerical method, reproduce the linear of the averaged $H^1$ energy,
and illustrate the progressive formation of steep descending fronts under
additive stochastic forcing.

The remainder of the paper is organized as follows. Section~2 studies the
continuous stochastic CH equation, including its stochastic
multi-symplectic formulation, averaged energy evolution, and wave-breaking
mechanism. Section~3 develops the fully discrete Fourier--Galerkin
splitting Runge--Kutta method and analyzes its structural properties and
discrete slope dynamics. Section~4 presents the numerical experiments.

\section[Stochastic Camassa--Holm equation with additive noise]{Stochastic Camassa--Holm equation\\with additive noise}
\label{sec.2}
We consider the following stochastic Camassa--Holm equation with additive noise
\begin{align}
d(u-u_{xx})
+
\bigl(
3uu_x-2u_xu_{xx}-uu_{xxx}
\bigr)\,dt
&=
\Phi\,dW(t),
\qquad
u(0)=u_0,
\label{eq:SCH_original}
\end{align}
where $W(\cdot)$ is a cylindrical Wiener process on the separable Hilbert
space $\mathcal U=L_0^2(\mathbb T_L) := \left\{ f \in L^2(\mathbb{T}_L) : \int_{\mathbb{T}_L} f(x) \, dx = 0 \right\}$. 
Here,
$\mathbb T_L:=\mathbb R/L\mathbb Z$ is identified with the periodic
interval $[a,b)$, where $L=b-a$. 
Throughout the paper, all spatial differential operators are understood in the periodic sense.
We set $\mu:=2\pi/L$ and
$\Lambda:=(1-\partial_x^2)^{-1}$. 
For $f\in L^2(\mathbb T_L)$, its Fourier coefficients are defined by $\widehat f(k) = \frac{1}{L} \int_a^b f(x)e^{-\mathrm{i}k\mu(x-a)}\,dx, k\in\mathbb Z,$  so that $f(x) = \sum\limits_{k\in\mathbb Z} \widehat f(k)e^{\mathrm{i}k\mu(x-a)} $ in the usual $L^2$ sense.
We equip $H^r(\mathbb T_L)$ with the norm $\|f\|_{H^r}^2 = \sum\limits_{k\in\mathbb Z} (1+|k\mu|^2)^r |\widehat f(k)|^2. $
The corresponding zero-mean subspace is denoted by
$H_0^r(\mathbb T_L)
=
\{f\in H^r(\mathbb T_L):\widehat f(0)=0\}$.
For $f\in H_0^r(\mathbb T_L)$, we define the zero-mean antiderivative
$\partial_x^{-1}f$ through
$\widehat{\partial_x^{-1}f}(k)
=
(\mathrm i k\mu)^{-1}\widehat f(k)$ for $k\ne0$, and
$\widehat{\partial_x^{-1}f}(0)=0$.
Then
$\partial_x^{-1}:H_0^r(\mathbb T_L)\to
H_0^{r+1}(\mathbb T_L)$
is bounded and is the inverse of $\partial_x$ on the zero-mean
subspace.

Applying $\Lambda$ to \eqref{eq:SCH_original} and using the identity
\begin{equation*}
\Lambda\left(3uu_x-2u_xu_{xx}-uu_{xxx}\right)
=uu_x+\Lambda\partial_x\left(u^2+\frac12u_x^2\right),
\end{equation*}
 we obtain the equivalent  formulation
\begin{align}
du+uu_x\,dt
&=
-\Lambda\partial_x
\left(
u^2+\frac12u_x^2
\right)\,dt
+
\Lambda\Phi\,dW(t),
\label{eq:sch-nonlocal}
\end{align}
which is  convenient for the subsequent analysis, since the nonlinear drift contains only first derivatives of $u$ together with the smoothing operator $\Lambda$.

We first recall the well-posedness result for the stochastic Camassa--Holm equation. The local well-posedness and continuation properties of
\eqref{eq:sch-nonlocal} follow from
\cite[Theorem~2.1]{RohdeTang2021}. Indeed, for $k=1$, the nonlocal
drift considered in \cite{RohdeTang2021} coincides with that in
\eqref{eq:sch-nonlocal}, while the additive noise considered here
corresponds to the state-independent coefficient
$h(t,u)\equiv\Lambda\Phi$. Hence the assumptions on the stochastic
coefficient in \cite{RohdeTang2021} are satisfied in the present
setting.

\begin{lemma}
\label{prop:continuous_framework}
Let $s>3/2$, let $u_0\in H^s(\mathbb T_L)$ be deterministic, and assume
\begin{equation*}
\Lambda\Phi\in\mathcal L_2(\mathcal U;H^s(\mathbb T_L)).
\end{equation*}
Then there exists a unique maximal pathwise solution
$(u,\tau_\infty)$ of \eqref{eq:sch-nonlocal} such that
\begin{equation*}
u\in C([0,\tau_\infty);H^s(\mathbb T_L))\quad\text{almost surely}.
\end{equation*}
Moreover,
\begin{align}
\mathbf 1_{\{\tau_\infty<\infty\}}
&=
\mathbf 1_{\{
\limsup\limits_{t\uparrow\tau_\infty}
\|u(t)\|_{W^{1,\infty}(\mathbb T_L)}
=\infty\}}
\qquad\text{a.s.}
\label{eq:continuous_continuation}
\end{align}
\end{lemma}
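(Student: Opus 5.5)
The plan is to reduce the statement to \cite[Theorem~2.1]{RohdeTang2021}; essentially nothing has to be proved from scratch. First, I will put \eqref{eq:sch-nonlocal} into the abstract form used there,
\begin{equation*}
du+\bigl[uu_x+F(u)\bigr]\,dt=h(t,u)\,dW(t),\qquad F(u):=\Lambda\partial_x\Bigl(u^2+\tfrac12u_x^2\Bigr),
\end{equation*}
with $h(t,u)\equiv\Lambda\Phi$. For $k=1$ this drift is exactly the nonlocal CH-type drift treated in that paper. Next, I will check the hypotheses on the noise coefficient. Since $h$ is independent of $t$ and $u$, it is trivially progressively measurable. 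The Lipschitz-type condition on $h$ holds with constant zero, because $h(t,u)-h(t,v)=0$. The growth bound $\|h(t,u)\|_{\mathcal L_2(\mathcal U;H^s)}\le C$ follows from the assumption $\Lambda\Phi\in\mathcal L_2(\mathcal U;H^s(\mathbb T_L))$. Any growth condition of the form $\|h\|_{\mathcal L_2}\le f(\|u\|_{W^{1,\infty}})(1+\|u\|_{H^s})$ is then satisfied with a constant $f$. The initial datum $u_0\in H^s$ with $s>3/2$ is deterministic, so any moment condition on $u_0$ there holds automatically. One point to confirm is that \cite{RohdeTang2021} is stated on a torus of period $2\pi$ or on $\mathbb R$. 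If so, I will either rescale $x\mapsto\mu(x-a)$, which turns $\mathbb T_L$ into the standard torus, or simply note that the periodic arguments do not depend on the length $L$.

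Applying the cited theorem then gives the unique maximal pathwise solution $(u,\tau_\infty)$ with $u\in C([0,\tau_\infty);H^s)$ almost surely, together with the blow-up alternative \eqref{eq:continuous_continuation}. The inclusion $\{\limsup\|u\|_{W^{1,\infty}}=\infty\}\subset\{\tau_\infty<\infty\}$ is immediate, since on $\{\tau_\infty=\infty\}$ the solution is continuous in $H^s\hookrightarrow W^{1,\infty}$ on every bounded interval. If the cited version only gives the blow-up criterion in the $H^s$ norm, I will upgrade it to the $W^{1,\infty}$ form myself. To do this, I will apply the Itô formula to $\|u\|_{H^s}^2$ after mollification with $J_\varepsilon$. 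I will control the transport term by the Kato--Ponce commutator estimate, $|([J_\varepsilon\Lambda^{-s/2}\ldots,u]\partial_x u,\ldots)|\lesssim\|u_x\|_{L^\infty}\|u\|_{H^s}^2$, and the nonlocal term by $\|F(u)\|_{H^s}\lesssim\|u\|_{W^{1,\infty}}\|u\|_{H^s}$. The Itô correction is bounded by $\|\Lambda\Phi\|_{\mathcal L_2(\mathcal U;H^s)}^2$. Stopping at $\tau_R=\inf\{t:\|u\|_{W^{1,\infty}}\ge R\}$ and using Grönwall together with the BDG inequality for the martingale term shows that $\|u\|_{H^s}$ stays finite on $[0,\tau_R\wedge T]$. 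Hence $\tau_\infty>\tau_R\wedge T$, which gives the other inclusion.

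The main obstacle is this $W^{1,\infty}$ refinement, specifically the commutator estimate uniform in $\varepsilon$; I expect it is already contained in \cite{RohdeTang2021}, so the proof reduces to the hypothesis check in the first paragraph.
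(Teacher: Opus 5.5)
Your reduction is the same as the paper's. There the lemma is justified only by noting that, for $k=1$, the drift in \cite[Theorem~2.1]{RohdeTang2021} coincides with that of \eqref{eq:sch-nonlocal}, and that the state-independent coefficient $h(t,u)\equiv\Lambda\Phi$ trivially satisfies the hypotheses on the noise. The $W^{1,\infty}$ continuation criterion is taken directly from the cited theorem, so your fallback energy argument is not needed.

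The one step you argue yourself is, however, not valid. On $\{\tau_\infty=\infty\}$, the quantity $\limsup_{t\uparrow\tau_\infty}\|u(t)\|_{W^{1,\infty}}$ is a limit as $t\to\infty$. Continuity of $u$ in $H^s\hookrightarrow W^{1,\infty}$ on bounded intervals says nothing about it. Read literally, the inclusion $\{\limsup_{t\uparrow\tau_\infty}\|u(t)\|_{W^{1,\infty}}=\infty\}\subset\{\tau_\infty<\infty\}$ actually fails under additive noise:
\begin{itemize}
\item By \eqref{eq:stopped_H1_energy}, on $\{\tau_\infty=\infty\}$ one has $\|u(t)\|_{H^1}^2=\|u_0\|_{H^1}^2+M_t+C_\Phi t$ for all $t$. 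Here the continuous local martingale $M$ satisfies $[M]_t\le 4C_\Phi\int_0^t\|u(\rho)\|_{H^1}^2\,d\rho$.
\item Almost surely on any path along which $\|u\|_{H^1}$ stays bounded, the martingale law of large numbers gives $M_t/t\to0$. Hence $\|u(t)\|_{H^1}^2/t\to C_\Phi$, which is a contradiction when $\Phi\neq0$.
\item Thus, for $\Phi\neq0$, almost surely on $\{\tau_\infty=\infty\}$ the $H^1$ norm, and with it the $W^{1,\infty}$ norm, is unbounded as $t\to\infty$. The literal identity \eqref{eq:continuous_continuation} is therefore false whenever $\mathbb P(\tau_\infty=\infty)>0$.
\end{itemize}

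So \eqref{eq:continuous_continuation} has to be read as a statement about blow-up at a finite time. Its content is the inclusion $\{\tau_\infty<\infty\}\subset\{\sup_{t<\tau_\infty}\|u(t)\|_{W^{1,\infty}}=\infty\}$. This is the only direction the paper uses later, namely to get $\tau_R\uparrow\tau_\infty$ and in Proposition~\ref{prop:pathwise_wave_breaking}. You should drop the ``immediate'' argument and state the criterion in that form.

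A smaller point concerns your first option for handling the period. The dilation $x\mapsto\mu(x-a)$ does not carry \eqref{eq:sch-nonlocal} to the equation on the standard torus, because $\Lambda$ becomes $(1-\mu^2\partial_y^2)^{-1}$ and the CH equation has no scaling symmetry that absorbs $\mu$. Only your second option, that the local theory does not depend on the period, works.
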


For the subsequent analysis  in Section \ref{sec.2}, we assume
$s>\frac52$, $u_0\in H_0^s(\mathbb T_L)$, and
$\Phi\in\mathcal L_2(\mathcal U;H_0^{s-2}(\mathbb T_L))$.
% The condition $s>\frac52$ ensures the continuous embedding
% $H^s(\mathbb T_L)\hookrightarrow W^{1,\infty}(\mathbb T_L),$
% which is sufficient for the continuous and spatially semi-discrete
% slope estimates considered below.
\iffalse
Since $\Lambda$ is a Fourier multiplier of order $-2$, $\Lambda: H_0^{s-2}(\mathbb T_L) \longrightarrow H_0^s(\mathbb T_L)$ is bounded, and hence $\Lambda\Phi \in \mathcal L_2 \bigl( \mathcal U;H_0^s(\mathbb T_L) \bigr).$

 The restriction $s>5/2$ guarantees the continuous embedding $H^s(\mathbb T_L) \hookrightarrow W^{1,\infty}(\mathbb T_L),$ which will be used repeatedly in the subsequent regularity and slope estimates.
For the analysis below, we assume that $s>5/2$,
$u_0\in H_0^s(\mathbb T_L)$ is deterministic, and
$\Phi\in
\mathcal L_2(\mathcal U;H_0^{s-2}(\mathbb T_L))$.
Since $\Lambda$ is a Fourier multiplier of order $-2$, it follows that
$\Lambda\Phi\in
\mathcal L_2(\mathcal U;H_0^s(\mathbb T_L))$. 
\fi
% Moreover, the zero-mean subspace is invariant under the dynamics. 
% Indeed, since
% $u_0\in H_0^s(\mathbb T_L)$ and the range of $\Phi$ is contained in
% $H_0^{s-2}(\mathbb T_L)$, 
Under this assumption, 
the stochastic forcing in
\eqref{eq:sch-nonlocal} has zero spatial mean. Integrating
\eqref{eq:sch-nonlocal} over $\mathbb T_L$ and using periodicity gives
$d\int_{\mathbb T_L}u(t,x)\,dx
=0.$
Consequently,
$\int_{\mathbb T_L}u(t,x)\,dx
=
\int_{\mathbb T_L}u_0(x)\,dx
=0$ for
$0\le t<\tau_\infty$ a.s.
Therefore, $u(t)\in H_0^s(\mathbb T_L)$ throughout its maximal interval
of existence, and the operator $\partial_x^{-1}$ is well defined along
the solution.

\subsection{Stochastic multi-symplectic structure}
Since $\Lambda$ and $\partial_x\Lambda$ are Fourier multipliers of
orders $-2$ and $-1$, respectively, we have
$G_0:=\Lambda\Phi\in\mathcal L_2(\mathcal U;H_0^s(\mathbb T_L))$ and
$\partial_xG_0\in
\mathcal L_2(\mathcal U;H_0^{s-1}(\mathbb T_L))$. Noting that $\partial_x^{-1}:H_0^{s-2}(\mathbb T_L) \to H_0^{s-1}(\mathbb T_L)$ is bounded, we have $\partial_x^{-1}\circ\Phi \in \mathcal L_2 \bigl( \mathcal U;H_0^{s-1}(\mathbb T_L) \bigr).$ We therefore introduce the auxiliary $H_0^{s-1}(\mathbb T_L)$-valued Wiener process $B(t) := \int_0^t \partial_x^{-1}\Phi\,dW(r), B(0)=0,$ whose covariance operator is $Q_B = (\partial_x^{-1}\Phi) (\partial_x^{-1}\Phi)^*. $
For convenience, set
$H:=u^2+\frac12u_x^2$ and
$F(u):=-uu_x-\Lambda\partial_xH$.
\iffalse
{\color{blue}
It can be verified that
there exists $C>0$ such that, for all $v,w\in H^3(\mathbb T_L)$,
with $e=v-w$,
\begin{align}
\left\langle e,F(v)-F(w)\right\rangle_{H^2}
&\le
C\bigl(\|v\|_{H^3}+\|w\|_{H^3}\bigr)
\|e\|_{H^2}^2,
\label{eq:CH_H2_one_sided}\\
\|F(v)-F(w)\|_{H^2}
&\le
C\bigl(\|v\|_{H^3}+\|w\|_{H^3}\bigr)
\|v-w\|_{H^3}.
\label{eq:CH_H3_to_H2_lipschitz}
\end{align}
}
\begin{proof}
Writing $e=v-w$, we have
\begin{align*}
F(v)-F(w)
={}&
-v\,\partial_xe
-e\,\partial_xw
\\
&-
\partial_x\Lambda
\left[
(v+w)e
+
\frac12(\partial_xv+\partial_xw)\partial_xe
\right].
\end{align*}
For the transport part, differentiating up to order two and integrating
the highest-order term by parts gives
\begin{align*}
\left|
\left\langle
e,
v\,\partial_xe+e\,\partial_xw
\right\rangle_{H^2}
\right|
\le
C\bigl(\|v\|_{H^3}+\|w\|_{H^3}\bigr)
\|e\|_{H^2}^2.
\end{align*}
Moreover, $\partial_x\Lambda$ is a Fourier multiplier of order $-1$.
Since $H^1(\mathbb T_L)$ is an algebra,
\begin{align*}
&\left\|
\partial_x\Lambda
\left[
(v+w)e
+
\frac12(\partial_xv+\partial_xw)\partial_xe
\right]
\right\|_{H^2}
\\
&\qquad\le
C\bigl(\|v\|_{H^3}+\|w\|_{H^3}\bigr)
\|e\|_{H^2}.
\end{align*}
This proves \eqref{eq:CH_H2_one_sided}.  The product estimate
$\|fg\|_{H^2}\le
C(\|f\|_{H^2}\|g\|_{L^\infty}
+\|f\|_{L^\infty}\|g\|_{H^2})$,
together with $H^2(\mathbb T_L)\hookrightarrow W^{1,\infty}(\mathbb T_L)$,
gives \eqref{eq:CH_H3_to_H2_lipschitz}.
\end{proof}
\fi
Using
$(1-\partial_x^2)(uu_x)+\partial_xH
=
3uu_x-2u_xu_{xx}-uu_{xxx}$,
equation~\eqref{eq:sch-nonlocal} can be written as
\begin{equation} \label{eq:abstract_SCH} du=F(u)\,dt+G_0\,dW(t). \end{equation}
We further introduce the auxiliary variables $p:=u_x,  w:=-\frac14u^2-\frac12\Lambda H(u), v:=up+\frac12F(u).$ These variables allow us to rewrite the stochastic CH equation as an stochastic  Hamiltonian partial differential equation, as stated in Proposition \ref{prop:MS_formulation}. 

\begin{proposition}
\label{prop:MS_formulation}
Assume that 
$u_0\in H_0^s(\mathbb T_L)$ 
and
$\Phi\in
\mathcal L_2
(\mathcal U;H_0^{s-2}(\mathbb T_L)), 
$ where $s>5/2$. 
%Let $u$ be a sufficiently regular  solution of \eqref{eq:SCH_original} satisfying $u\in C([0,\tau);H_0^s(\mathbb T_L))$, a.s., 
%and 
Let $\phi$ be defined by
$d\phi=2w\,dt+\partial_x^{-1}G_0 dW(t)$, $\phi(0)=\phi_0$,
where $\phi_0$ satisfies
$(\phi_0)_x=u_0.$
Then \eqref{eq:SCH_original} is equivalent to
\begin{align}
\left\{
\begin{aligned}
&\frac12d\phi-\frac12dp-v_xdt
=
\left(
-w-\frac32u^2-\frac12p^2
\right)dt
+\frac12dB(t),
\\
&-\frac12du+w_xdt
=
-\frac12G_0 dW(t),
\quad -\phi_xdt
=
-u\,dt,
\quad u_xdt
=
p\,dt,
\\
&\frac12du
=
(-up+v)dt+\frac12G_0  dW(t).
\end{aligned}
\right.
\label{eq:first_order_system}
\end{align}
Equivalently,
\begin{align}
M\,d\mathbf z+K\mathbf z_xdt
=
\nabla_{\mathbf z}S_1(\mathbf z)dt
+\mathcal G dW(t).
\label{eq:multi_symplectic_form}
\end{align}
Here $\mathbf z=[u,\phi,w,v,p]^{\mathrm T}$,
$S_1(\mathbf z)
=
-wu-\frac12u^3-\frac12up^2+pv,
$
and
\begin{align*}
M&=
\begin{bmatrix}
0&\frac12&0&0&-\frac12\\
-\frac12&0&0&0&0\\
0&0&0&0&0\\
0&0&0&0&0\\
\frac12&0&0&0&0
\end{bmatrix},
\quad 
K=
\begin{bmatrix}
0&0&0&-1&0\\
0&0&1&0&0\\
0&-1&0&0&0\\
1&0&0&0&0\\
0&0&0&0&0
\end{bmatrix}, 
\quad
\mathcal Gh
=
\frac12
\begin{bmatrix}
\partial_x^{-1}\Phi h\\
-\Lambda\Phi h\\
0\\
0\\
\Lambda\Phi h
\end{bmatrix}.
\end{align*}
\end{proposition}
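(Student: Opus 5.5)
The plan is to verify the equivalence by direct substitution. I will use the abstract form \eqref{eq:abstract_SCH}, $du=F(u)\,dt+G_0\,dW$, together with two operator identities on zero-mean functions: $\Lambda\partial_x^2=\Lambda-I$, and $\partial_x\partial_x^{-1}=I$ on $H_0^r$. Throughout, $u(t)\in H_0^s$ by the mean-conservation argument given after Lemma~\ref{prop:continuous_framework}. The first observation is that $w_x=-\frac12uu_x-\frac12\Lambda\partial_xH=\frac12F(u)$, so $2w_x=F(u)$.

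\textbf{Step 1 (the constraint $\phi_x=u$ propagates).} Differentiating the definition of $\phi$ gives
\begin{equation*}
d\phi_x=2w_x\,dt+\partial_x\partial_x^{-1}G_0\,dW=F(u)\,dt+G_0\,dW=du .
\end{equation*}
Here I use that $G_0=\Lambda\Phi$ has zero-mean range. Hence $\phi_x-u$ is constant in time, and it vanishes at $t=0$ by the choice of $\phi_0$. So the third equation $-\phi_x\,dt=-u\,dt$ holds, and the fourth, $u_x=p$, is a definition.

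\textbf{Step 2 (the remaining rows).} The equation $-\frac12du+w_x\,dt=-\frac12G_0\,dW$ is just $\frac12\times$\eqref{eq:abstract_SCH}, using $w_x=\frac12F$. The last equation reduces to the same identity, since $v-up=\frac12F$.

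The first equation is the only real computation, and I expect it to be the main obstacle. Substituting $d\phi$, $dp=\partial_xF\,dt+\partial_xG_0\,dW$ and $v_x=(up)_x+\frac12\partial_xF$, I will treat the noise and drift parts separately.
\begin{itemize}
\item \emph{Noise part.} It equals $\frac12(\partial_x^{-1}-\partial_x)\Lambda\Phi\,dW=\frac12\partial_x^{-1}(1-\partial_x^2)\Lambda\Phi\,dW=\frac12\partial_x^{-1}\Phi\,dW=\frac12dB$, as required.
\item \emph{Drift part.} I need $w-\partial_xF-(up)_x=-w-\frac32u^2-\frac12p^2$, that is, $2w-\partial_xF-(up)_x=-\frac32u^2-\frac12u_x^2$. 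Using $\Lambda\partial_x^2=\Lambda-I$ gives
\begin{equation*}
\partial_xF=-(u_x^2+uu_{xx})+H-\Lambda H .
\end{equation*}
Since $(up)_x=u_x^2+uu_{xx}$, the left side becomes $-\frac12u^2-\Lambda H-H+\Lambda H=-\frac12u^2-H$. This equals $-\frac32u^2-\frac12u_x^2$, as required.
\end{itemize}
For the converse direction, the last equation together with the definition of $v$ (or the second equation with the definition of $w$) returns \eqref{eq:abstract_SCH}. This is equivalent to \eqref{eq:SCH_original} by applying $1-\partial_x^2$.

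\textbf{Step 3 (matrix form).} I will read \eqref{eq:multi_symplectic_form} row by row. Compute
\begin{equation*}
\nabla_{\mathbf z}S_1=\Bigl(-w-\tfrac32u^2-\tfrac12p^2,\;0,\;-u,\;p,\;-up+v\Bigr)^{\mathrm T},
\end{equation*}
and compare with $M\,d\mathbf z$ and $K\mathbf z_x$ from the given matrices.
\begin{itemize}
\item Row 1 is $\frac12d\phi-\frac12dp-v_x\,dt$.
\item Row 2 is $-\frac12du+w_x\,dt$.
\item Row 3 is $-\phi_x\,dt$.
\item Row 4 is $u_x\,dt$.
\item Row 5 is $\frac12du$.
\end{itemize}
The noise column of $\mathcal G$ matches the stochastic terms in \eqref{eq:first_order_system}, since $\frac12\partial_x^{-1}\Phi h$ corresponds to $\frac12dB$ and $\mp\frac12\Lambda\Phi h$ corresponds to $\mp\frac12G_0\,dW$. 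This identifies \eqref{eq:first_order_system} with \eqref{eq:multi_symplectic_form}.
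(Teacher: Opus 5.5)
Your proposal is correct and follows essentially the same route as the paper: direct substitution using $2w_x=F(u)$, $v-up=\tfrac12F(u)$, $d(\phi_x-u)=0$, the identity $\partial_xF+(up)_x=(I-\Lambda)H$ (which you obtain from $\Lambda\partial_x^2=\Lambda-I$), and $(\partial_x^{-1}-\partial_x)\Lambda\Phi=\partial_x^{-1}\Phi$ for the noise term. Your row-by-row check of $M\,d\mathbf z+K\mathbf z_x\,dt$ against $\nabla_{\mathbf z}S_1$ and $\mathcal G$, and your observation that $\phi_x-u$ vanishes at $t=0$, make explicit what the paper leaves implicit.
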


\begin{proof}
By the definitions of $w$ and $v$, we have
$2w_x
=-\frac12\partial_x(u^2)-\partial_x\Lambda H
=
-uu_x-\Lambda\partial_xH
=F(u)$ and $v=up+\frac 12F(u)$. 
Next, taking differentiation of $\phi$ with respect to $x$ gives
$d\phi_x
=
2w_x\,dt
+
G_0\,dW(t)=F(u)\,dt+G_0\,dW(t).$
Combining with \eqref{eq:abstract_SCH} yields
$d(\phi_x-u)=0.$
Moreover, differentiating \eqref{eq:abstract_SCH} with respect
to $x$ gives $du_x
=
F_x\,dt+\partial_xG_0\,dW(t).$
Hence, using $p=u_x$ and $v=up+\frac 12F(u)$, we obtain
\begin{align*}
\frac12d\phi-\frac12dp-v_x\,dt
={}&
\left(
w-F_x-(up)_x
\right)dt
+\frac12
\left(
\partial_x^{-1}G_0-\partial_xG_0
\right)dW(t).
\end{align*}
Observe that $F(u)+up
=-\Lambda\partial_xH,$
which implies $F_x+(up)_x
=
-\Lambda\partial_x^2H.$
Since $\Lambda=(1-\partial_x^2)^{-1},$
we deduce $-\Lambda\partial_x^2H
=(I-\Lambda)H,$ and thus $F_x+(up)_x=(I-\Lambda)H.$
Making use of $G_0=\Lambda\Phi$, we obtain
$\partial_x^{-1}G_0-\partial_xG_0
=
\partial_x^{-1}\Lambda\Phi
-\partial_x\Lambda\Phi
=\partial_x^{-1}\Phi,$
which yields the first equation in
\eqref{eq:first_order_system}.
\iffalse
Moreover, $p=u_x$ and $d(\phi_x-u)=0$.
The first component follows from
$F_x+(up)_x=(I-\Lambda)H$ and
$\partial_x^{-1}G_0-\partial_xG_0=\partial_x^{-1}\Phi$.
\fi
The remaining four equations in
\eqref{eq:first_order_system} follow directly from 
substituting these identities to  obtain
\eqref{eq:first_order_system} and hence
\eqref{eq:multi_symplectic_form}.
Conversely, eliminating the auxiliary variables recovers
\eqref{eq:SCH_original}.
\end{proof}

Now we investigate the stochastic multi-symplectic conservation law of \eqref{eq:multi_symplectic_form}.

\begin{proposition}
Assume that the stochastic multi-symplectic system
\eqref{eq:multi_symplectic_form} admits a solution
$\mathbf z=[u,\phi,w,v,p]^{\mathrm T}$ such that
\begin{itemize}
\item $\mathbf z$ is sufficiently regular in the spatial variable and
$\mathbf z_x$ is well defined;
\item the phase-space variation
$\xi=\mathbf d\mathbf z$ exists and is sufficiently regular.
\end{itemize}
Then the local stochastic multi-symplectic conservation law
\begin{align}
d\omega+\partial_x\kappa\,dt=0,\quad a.s.,
\label{eq:MS_conservation}
\end{align}
holds, where
\begin{align*}
\omega
&=
\frac12\mathbf d\mathbf z\wedge M\mathbf d\mathbf z
=
\frac12\mathbf du\wedge\mathbf d\phi
-\frac12\mathbf du\wedge\mathbf dp,
\; 
\kappa
=
\frac12\mathbf d\mathbf z\wedge K\mathbf d\mathbf z
=
-\mathbf du\wedge\mathbf dv
+\mathbf d\phi\wedge\mathbf dw .
\end{align*}
Here $\mathbf d$ denotes the exterior derivative with respect to the
phase variables, whereas $d$ denotes the stochastic time differential.
\end{proposition}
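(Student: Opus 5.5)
The argument follows the standard route for multi-symplectic Hamiltonian PDEs: derive the variational equation and wedge it against the variation itself. The key observation is that the noise is additive, so it disappears from the variational equation.

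\textbf{Step 1: the variational equation.} Apply the phase-space exterior derivative $\mathbf d$ to \eqref{eq:multi_symplectic_form}. The term $\mathcal G\,dW(t)$ does not depend on $\mathbf z$, so $\mathbf d(\mathcal G\,dW)=0$. Hence $\xi=\mathbf d\mathbf z$ satisfies
\begin{equation*}
M\,d(\mathbf d\mathbf z)+K\,\partial_x(\mathbf d\mathbf z)\,dt=\nabla^2_{\mathbf z}S_1(\mathbf z)\,\mathbf d\mathbf z\,dt .
\end{equation*}
This is a linear equation with $dt$-type stochastic coefficients only; the stochastic differential of $\mathbf d\mathbf z$ is driven solely through $\mathbf z$. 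Exchanging $\mathbf d$ with $d$ and with $\partial_x$ is justified by the assumed regularity of $\xi$ and $\mathbf z_x$. Commuting $\mathbf d$ with $d$ is legitimate precisely because the noise coefficient is independent of $\mathbf z$: we may view $\mathbf d\mathbf z$ as the derivative of the solution map with respect to initial data along a fixed noise path.

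\textbf{Step 2: wedge with $\mathbf d\mathbf z$.} Take the wedge product of $\mathbf d\mathbf z$ with this equation. The right-hand side gives $\mathbf d\mathbf z\wedge \nabla^2 S_1\,\mathbf d\mathbf z\,dt=0$, since the Hessian of $S_1$ is symmetric. For the time term we need $d\omega=\frac12 d(\mathbf d\mathbf z\wedge M\mathbf d\mathbf z)$. By the Itô product rule,
\begin{equation*}
d\omega=\tfrac12\, d\mathbf d\mathbf z\wedge M\mathbf d\mathbf z+\tfrac12\,\mathbf d\mathbf z\wedge M\,d\mathbf d\mathbf z+\text{(quadratic covariation)} .
\end{equation*}
The covariation term vanishes: by Step 1, $\mathbf d u$, $\mathbf d\phi$ and $\mathbf d p$ have no martingale part. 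Indeed, differentiating $d\phi=2w\,dt+\partial_x^{-1}G_0\,dW$, $du=F(u)\,dt+G_0\,dW$ and $dp=F_x\,dt+\partial_xG_0\,dW$ in phase space kills every $dW$ term, so these variations are of bounded variation in $t$. Skew-symmetry of $M$ then gives $d\omega=\mathbf d\mathbf z\wedge M\,d(\mathbf d\mathbf z)$. In the same way, skew-symmetry of $K$ gives $\partial_x\kappa=\mathbf d\mathbf z\wedge K\,\partial_x\mathbf d\mathbf z$. Summing yields $d\omega+\partial_x\kappa\,dt=0$ a.s.

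\textbf{Step 3: explicit forms.} Finally, expand $\frac12\mathbf d\mathbf z\wedge M\mathbf d\mathbf z$ and $\frac12\mathbf d\mathbf z\wedge K\mathbf d\mathbf z$ using the given matrices. For $\omega$, the entries $M_{12}=\frac12$, $M_{21}=-\frac12$ produce $\frac12\mathbf du\wedge\mathbf d\phi$, and $M_{15}=-\frac12$, $M_{51}=\frac12$ produce $-\frac12\mathbf du\wedge\mathbf dp$. For $\kappa$, the entries of $K$ similarly give $-\mathbf du\wedge\mathbf dv+\mathbf d\phi\wedge\mathbf dw$.

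\textbf{Main obstacle.} The delicate point is Step 2: rigorously justifying that no Itô correction arises and that $\mathbf d$ commutes with the stochastic differential. I would handle both by working pathwise with the variational (linearized) system. Because $G_0$ and $\partial_x^{-1}\Phi$ do not depend on the state, the linearized system is a random PDE with no $dW$ term, so classical calculus applies pathwise.
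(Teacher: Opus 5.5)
Your proposal is correct and follows essentially the same route as the paper: you apply $\mathbf d$ to obtain the noise-free variational equation $M\,d\xi+K\xi_x\,dt=S_1''(\mathbf z)\xi\,dt$, wedge it with $\xi$, and use the symmetry of $S_1''$ together with the skew-symmetry of $M$ and $K$. Your explicit check that the It\^o covariation term vanishes fills in a point the paper leaves implicit. It vanishes because the additive noise drops out under $\mathbf d$, so $\mathbf du,\mathbf d\phi,\mathbf dp$ are of bounded variation in $t$.
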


\begin{proof}
Let $\xi=\mathbf d\mathbf z$. 
Taking the exterior derivative of
\eqref{eq:multi_symplectic_form} %, and using the fact that $M$ and $K$ are constant matrices, gives 
implies 
%the variational equation
\begin{equation}
\label{eq:variational_MS}
M\,d\xi+K\xi_x\,dt
=
S_1''(\mathbf z)\xi\,dt.
\end{equation}
% Since the noise operator $\mathcal G$ is independent of the phase
% variables, its phase space exterior derivative vanishes. Thus, the
% additive stochastic forcing does not contribute to the variational
% equation \eqref{eq:variational_MS}.
Taking the wedge product of \eqref{eq:variational_MS} with $\xi$ 
together with the skew symmetry of $M$ and $K$, we obtain
$d\bigl(\frac12\xi\wedge M\xi\bigr)+
\partial_x\bigl(\frac12\xi\wedge K\xi\bigr)dt=0$,
which is precisely \eqref{eq:MS_conservation}.
% {\color{red} For the present matrix $M$ and $K$, a direct calculation together with the antisymmetry of the wedge product, we can obtain the expression for $\omega$ and $\kappa.$}
\iffalse
$\frac12\mathbf d\mathbf z\wedge M\mathbf d\mathbf z
=
\frac12\left[
\frac12\mathbf du\wedge\mathbf d\phi
-\frac12\mathbf du\wedge\mathbf dp
-\frac12\mathbf d\phi\wedge\mathbf du
+\frac12\mathbf dp\wedge\mathbf du
\right]
=
\frac12\mathbf du\wedge\mathbf d\phi
-\frac12\mathbf du\wedge\mathbf dp,$
where the antisymmetry of the wedge product has been used. Thus, $\omega
=
\frac12\mathbf du\wedge\mathbf d\phi
-\frac12\mathbf du\wedge\mathbf dp.$
A similar calculation yields the expression for $\kappa$.
\fi
\end{proof}

We next investigate the averaged energy evolution law of the original equation. Together with the multi-symplectic formulation above, this gives another intrinsic  property.

\subsection{Averaged energy evolution law and characteristic  slope law}
Define the energy functional of \eqref{eq:multi_symplectic_form} by
$E(t):=\frac12\|u(t)\|_{H^1}^2$.
Set
\begin{equation*}
F(u):=-uu_x-\Lambda\partial_x\left(u^2+\frac12u_x^2\right).
\end{equation*}
Then the equation can be written as
$du=F(u)\,dt+G_0\,dW(t).$ 
For $R\ge1$, we introduce the stopping times
\begin{equation*}
\tau_R:=\inf\left\{t\in[0,\tau_\infty):
\|u(t)\|_{H^s}\ge R\right\}\wedge R,
\end{equation*}
with the convention $\inf\emptyset=\infty$.
% {\color{red} It is immediate that $\{\tau_R\}_{R\ge1}$ is nondecreasing in $R$. Thus, $\tau_R\le R$ almost surely.} 
%Moreover, s
Since $u\in C([0,\tau_\infty);H^s)$ almost surely and
$H^s(\mathbb T_L)\hookrightarrow W^{1,\infty}(\mathbb T_L)$ for $s>5/2$,
the continuation criterion \eqref{eq:continuous_continuation} implies
$\tau_R\uparrow\tau_\infty$ a.s. as $R\to\infty$. 
Applying It\^o's formula to the stopped process $u(t\wedge\tau_R)$, we obtain
\begin{align}
\|u(t\wedge\tau_R)\|_{H^1}^2
&=
\|u_0\|_{H^1}^2
+2\int_0^{t}{\bf{1}}_{[0,\tau_R]}(s)(u(s),\Lambda\Phi\,dW(s))_{H^1}
+C_\Phi(t\wedge\tau_R),
\label{eq:stopped_H1_energy}
\end{align}
where 
$C_\Phi
:=
\|\Lambda\Phi\|_{\mathcal L_2(\mathcal U;H^1(\mathbb T_L))}^2
=
\|\Lambda\Phi\|_{\mathcal L_2(\mathcal U;L^2(\mathbb T_L))}^2
+
\|\partial_x\Lambda\Phi\|_{\mathcal L_2(\mathcal U;L^2(\mathbb T_L))}^2.$
%\label{eq:energy_growth_constant}
% {\color{red}The stopped stochastic integral is a square-integrable martingale with zero expectation.} 
Taking expectation  %therefore gives
leads to
\begin{align}
\mathbb E\|u(t\wedge\tau_R)\|_{H^1}^2
=
\|u_0\|_{H^1}^2
+
C_\Phi\mathbb E[t\wedge\tau_R].
\label{eq:energy_growth_h1}
\end{align} 
Suppose that 
$\mathbb P(\tau_\infty>T)=1
$ 
and 
$\mathbb E\sup\limits_{0\le t\le T}
\|u(t)\|_{H^s}^2<\infty.$ 
Then $\left\{
\|u(t\wedge\tau_R)\|_{H^1}^2
\right\}_{R\ge1}$
is uniformly integrable for each $t\in[0,T]$.   Letting
$R\to\infty$ in \eqref{eq:energy_growth_h1}, we obtain, for every
$0\le t\le T$,
\begin{align*}
\mathbb E[E(t)]
=
\frac12\|u_0\|_{H^1}^2
+\frac12C_\Phi t.
\end{align*}

We next relate the energy estimate to the slope dynamics by deriving an evolution equation for the slope along characteristics. 
For each $x\in\mathbb R$, let $q(t,x)$ be the characteristic flow
defined by $\frac{d}{dt}q(t,x)=u(t,q(t,x)), q(0,x)=x.$ 
As shown in \cite{ChenDuanGao2021},  taking integration implies 
$q(t,x)=x+\int_0^t u(s,q(s,x))\,ds$, and thus 
$\partial_xq(t,x)=1+\int_0^t
u_x(s,q(s,x))\partial_xq(s,x)ds$.
Consequently, 
$\frac{d}{dt}\partial_xq(t,x)
=u_x(t,q(t,x))\partial_xq(t,x)$,
with $\partial_xq(0,x)=1$.
Solving this scalar equation yields \begin{align}
\label{partialx:q}
\partial_xq(t,x)
=
\exp\left(
\int_0^t u_x(s,q(s,x))\,ds
\right)>0.
\end{align} 
Making use of $\partial_xq(t,x)>0$, the periodicity of $u$ and uniqueness of the characteristic
equation, we deduce $q(t,x+L)=q(t,x)+L.$
Hence, $q(t,\cdot)$ induces an orientation-preserving
$C^1$-diffeomorphism of $\mathbb T_L$.  

% By the periodicity of $u$, the flow satisfies
% $q(t,x+L)=q(t,x)+L$ and therefore induces a flow on
% $\mathbb T_L$.

% \begin{lemma}
% Assume that $s>5/2$ and
% $u\in C([0,\tau_\infty(\omega));H^s(\mathbb T_L))$.
% Then, for almost every $\omega$, the characteristic flow
% \begin{align}
% \label{partialx:q}
% \partial_xq(t,x)
% =
% \exp\left(
% \int_0^t u_x(s,q(s,x))\,ds
% \right)>0.
% \end{align}
% Moreover, for every $t<\tau_\infty$, the map
% $q(t,\cdot):\mathbb T_L\to\mathbb T_L$ is an orientation-preserving $C^1$-diffeomorphism a.s. 
% \end{lemma}

% \begin{proof}
% Taking integration, we obtain
% $q(t,x)=x+\int_0^t u(s,q(s,x))\,ds$ which yields 
% $\partial_xq(t,x)=1+\int_0^t
% u_x(s,q(s,x))\partial_xq(s,x)ds$.
% Thus, 
% $\frac{d}{dt}\partial_xq(t,x)
% =u_x(t,q(t,x))\partial_xq(t,x)$,
% with $\partial_xq(0,x)=1$.
% Solving this scalar equation yields \eqref{partialx:q}. 
% Since $\partial_xq(t,x)>0$, the flow is strictly increasing.
% Moreover, periodicity of $u$ and uniqueness of the characteristic
% equation imply $q(t,x+L)=q(t,x)+L.$
% Hence $q(t,\cdot)$ induces an orientation-preserving
% $C^1$-diffeomorphism of $\mathbb T_L$. 
% \end{proof}

Let $v=u_x$. Differentiating 
% {\color{red}the nonlocal form of}   
\eqref{eq:sch-nonlocal}
with respect to $x$, we obtain
\begin{align*}
dv+\partial_x(uv)\,dt
=
-\partial_x^2\Lambda
\left(u^2+\frac12v^2\right)dt
+
\partial_x\Lambda\Phi\,dW(t).
\end{align*}
Using
$\partial_x(uv)=v^2+uv_x$ and
$-\partial_x^2\Lambda=I-\Lambda$, 
\begin{align*}
dv
=
\left[
-v^2-uv_x
+
(I-\Lambda)
\left(u^2+\frac12v^2\right)
\right]dt
+
\partial_x\Lambda\Phi\,dW(t).
\end{align*} 
% {\color{red}Since $s>5/2$, we have
% $H^s(\mathbb T_L)\hookrightarrow C^2(\mathbb T_L)$,
% so that $v=u_x$ and $v_x=u_{xx}$ admit continuous
% representatives.} 
Fix $x_0\in\mathbb T_L$ and define the slope along the characteristic
through $x_0$ by
$U(t):=v(t,q(t,x_0))$.
%We first verify that the stochastic term evaluated along the characteristic is well defined. 
Since $\partial_x\Lambda\Phi\in
\mathcal L_2(\mathcal U;H_0^{s-1}(\mathbb T_L))$, the Sobolev embedding
$H^{s-1}(\mathbb T_L)\hookrightarrow L^\infty(\mathbb T_L)$ gives
\begin{align*}
\sum_{j\ge1}\|(\partial_x\Lambda\Phi)e_j\|_{L^\infty}^2
&\le C\|\partial_x\Lambda\Phi\|_{\mathcal L_2(\mathcal U;H^{s-1})}^2
<\infty.
\end{align*}
Consequently,
\begin{equation*}
\sum_{j\ge1}(\partial_x\Lambda\Phi e_j)(q(t,x_0))\,d\beta_j(t)
\end{equation*}
defines a square-integrable martingale on every localized time
interval. 
As the characteristic flow has finite variation, %the chain rule for $v(t,q(t,x_0))$ indicates
we deduce
\begin{align*}
&dU(t)\\
=&
\big[
-v(t,q(t,x_0))^2
-u(t,q(t,x_0))v_x(t,q(t,x_0))
+
(I-\Lambda)
\big(u^2+\frac12v^2\big)
(t,q(t,x_0))
\big]dt
\\
&+
u(t,q(t,x_0))v_x(t,q(t,x_0))dt+
(\partial_x\Lambda\Phi\,dW(t))(q(t,x_0))\\
=&
\left[
-U(t)^2
+
(I-\Lambda)
\left(u^2+\frac12v^2\right)
(t,q(t,x_0))
\right]dt
+
(\partial_x\Lambda\Phi\,dW(t))(q(t,x_0)).
\end{align*}
Using
$(I-\Lambda)f=f-\Lambda f$ and $v=u_x$, we obtain
\begin{align}
\label{eq:dU_characteristic}
dU(t)=&
\left[
-\frac12U(t)^2
+
u(t,q(t,x_0))^2
-
\Lambda
\left(
u^2+\frac12u_x^2
\right)
(t,q(t,x_0))
\right]dt\\
&+
(\partial_x\Lambda\Phi\,dW(t))(q(t,x_0)).\notag
\end{align}
Integrating
\eqref{eq:dU_characteristic}, we obtain
\begin{align*}
U(t\wedge\tau_R)
&=
U(0)
+
\int_0^{t\wedge\tau_R}
\left[
-\frac12U(s)^2
+
u(s,q(s,x_0))^2
-
A(s)
\right]ds
+
M_{t\wedge\tau_R},
\end{align*}
where
$M_{t\wedge\tau_R}
:=
\int_0^{t\wedge\tau_R}
(\partial_x\Lambda\Phi\,dW(s))(q(s,x_0))$, 
and 
$A(t)
:=
\Lambda
\left(
u^2+\frac12u_x^2
\right)
(t,q(t,x_0)).$

% The preceding identities are first understood on the localized
% interval $[0,T\wedge\tau_R]$, where the characteristic flow and the
% pointwise evaluations are well defined. 

Let $G_L$ denote the periodic Green kernel of
$\Lambda=(1-\partial_x^2)^{-1}$. Since $G_L$ is strictly positive,
% $\Lambda$ is positivity preserving. Therefore, 
we obtain 
\begin{align*}
0\leq A(t)
&=\int_{\mathbb T_L}G_L(q(t,x_0)-y)
\left(u(t,y)^2+\frac12u_x(t,y)^2\right)\,dy\\
&\leq\|G_L\|_{L^\infty}
\left(\|u(t)\|_{L^2}^2+\frac12\|u_x(t)\|_{L^2}^2\right)\\
&\leq C_L\|u(t)\|_{H^1}^2.
\end{align*}
Since 
% $|U(t)|^2
% +
% |u(t,q(t,x_0))|^2
% \le
% C\|u(t)\|_{H^s}^2,$   
\iffalse
Moreover, denoting $G_L$ the periodic Green kernel of
$\Lambda=(1-\partial_x^2)^{-1}$, then
\begin{align*}
0\le %A(t)
%&=
\Lambda\left(u^2+\frac12u_x^2\right)
(t,q(t,x_0))
\le
\|G_L\|_{L^\infty}
\left(
\|u(t)\|_{L^2}^2
+
\frac12\|u_x(t)\|_{L^2}^2
\right)
\le
C_L\|u(t)\|_{H^1}^2.
\end{align*}
\fi
%and thus
$|U(t)|^2
+
|u(t,q(t,x_0))|^2
+
|A(t)|
\le
C_L\|u(t)\|_{H^s}^2,$
we obtain the right-hand side is integrable on
$\Omega\times[0,T]$ by means of
$\mathbb P(T<\tau_\infty)=1$ and
$\mathbb E\sup\limits_{0\le t\le T}\|u(t)\|_{H^s}^2<\infty$. 
It follows that the localization can be removed, the martingale term
has zero expectation, and $t\mapsto\mathbb E[U(t)]$ is absolutely
continuous. Consequently, for almost every $t\in[0,T]$,
\begin{align*}
\frac{d}{dt}\mathbb E[U(t)]
=
-\frac12\mathbb E[U(t)^2]
+
\mathbb E[u(t,q(t,x_0))^2]
-
\mathbb E[A(t)] .
\end{align*}

We next combine the energy estimate with the characteristic slope equation to characterize the wave breaking property of the original system.

\subsection{Wave breaking criterion}
\label{subsec:continuous_wave_breaking}
Let  
$ Z(t):=\int_0^t\Lambda\Phi\,dW(r),  t\geq0.$
Because $\Lambda\Phi\in
\mathcal L_2(\mathcal U;H_0^s(\mathbb T_L)),$
the stochastic convolution $Z$ has $H^s$-valued continuous
trajectories.
\iffalse
Since
$\Lambda\Phi
\in
\mathcal L_2(\mathcal U;H_0^s(\mathbb T_L))$,
the process $Z$ has continuous trajectories in
$H_0^s(\mathbb T_L)$. 
\fi
Moreover, since $s>5/2$,
\begin{align}
\sup_{0\le t\le T}
\left(
\|Z_x(t)\|_{L^\infty}
+
\|Z_{xx}(t)\|_{L^\infty}
\right)
&<\infty
\qquad\text{a.s.}
\label{eq:pathwise_Z_bound}
\end{align}
Now we study that any finite-time loss of regularity occurs through the unbounded steepening of the negative slope, while the solution amplitude and the positive part of the slope remain bounded.

\begin{proposition}
\label{prop:pathwise_wave_breaking}
Let $s>5/2$, let
$u_0\in H_0^s(\mathbb T_L)$ be deterministic, and assume
\begin{equation*}
\Phi\in\mathcal L_2(\mathcal U;H_0^{s-2}(\mathbb T_L)).
\end{equation*}
Let $(u,\tau_\infty)$ be the corresponding maximal pathwise solution.
Then there exists an event $\Omega_{\rm wb}$ with
$\mathbb P(\Omega_{\rm wb})=1$ such that, for every
$\omega\in\Omega_{\rm wb}$ satisfying
$\tau_\infty(\omega)<\infty$,
\begin{align}
\sup_{0\le t<\tau_\infty(\omega)}
\|u(t,\omega)\|_{L^\infty}
&<\infty,
\label{eq:pathwise_u_bound}\\
\sup_{0\le t<\tau_\infty(\omega)}
\sup_{x\in\mathbb T_L}
u_x(t,x,\omega)
&<\infty,
\label{eq:pathwise_positive_slope}\\
\lim_{t\uparrow\tau_\infty(\omega)}
\inf_{x\in\mathbb T_L}
u_x(t,x,\omega)
&=-\infty.
\label{eq:pathwise_negative_slope}
\end{align}
\end{proposition}

\begin{proof}
Define $M_R(t)
:=
2\int_0^{t\wedge\tau_R}
\bigl(u(\rho),\Lambda\Phi\,dW(\rho)\bigr)_{H^1}.$ 
Its quadratic variation satisfies
\begin{align*}
[M_R]_T=
4\int_0^{T\wedge\tau_R}
\|(\Lambda\Phi)^*u(\rho)\|_{\mathcal U}^2\,d\rho\leq
4C_\Phi
\int_0^{T\wedge\tau_R}
\|u(\rho)\|_{H^1}^2\,d\rho,
\end{align*} 
where $(\Lambda\Phi)^*$ is the adjoint operator of
$\Lambda\Phi:\mathcal U\to H^1(\mathbb T_L)$. 
By the Burkholder--Davis--Gundy inequality and Young's inequality, 
\begin{equation}
\begin{aligned}
\mathbb E
\sup_{0\le t\le T}|M_R(t)|
&\le
C C_\Phi^{1/2}
\mathbb E
\Big(
\int_0^{T\wedge\tau_R}
\|u(\rho)\|_{H^1}^2\,d\rho
\Big)^{1/2}\\
&\le
\frac12
\mathbb E
\sup_{0\le t\le T\wedge\tau_R}
\|u(t)\|_{H^1}^2
+
C_T,
\label{eq:pathwise_M_BDG}
\end{aligned}
\end{equation}
where $C_T$ is independent of $R$.
Taking the supremum in \eqref{eq:stopped_H1_energy} and applying both \eqref{eq:pathwise_M_BDG} and Young's inequality, we obtain $\sup\limits_{R\geq1} \mathbb E \sup\limits_{0\leq t\leq T\wedge\tau_R} \|u(t)\|_{H^1}^2 <\infty.$  
Since $\tau_R\uparrow\tau_\infty$, Fatou's lemma and
\eqref{eq:stopped_H1_energy} imply $\mathbb E
\left[
\sup\limits_{0\leq t<T\wedge\tau_\infty}
\|u(t)\|_{H^1}^2
\right]
<\infty$
for every fixed $T>0$. Consequently, $\sup\limits_{0\leq t<T\wedge\tau_\infty}
\|u(t)\|_{H^1}
<\infty, $ a.s. 
Taking a countable intersection over $T\in\mathbb N$, we obtain
$\sup\limits_{0\leq t<\tau_\infty}
\|u(t)\|_{H^1}
<\infty$ a.s. on $\{\tau_\infty<\infty\}.$
\iffalse
Since $\tau_R\uparrow\tau_\infty$, Fatou's lemma yields, for every
deterministic $T>0$, 
$\sup\limits_{0\le t<T\wedge\tau_\infty}
\|u(t)\|_{H^1}
<\infty$ a.s.
%\label{eq:pathwise_H1_local} 
Taking a countable intersection over $T\in\mathbb N$ gives 
$\sup\limits_{0\le t<\tau_\infty}
\|u(t)\|_{H^1}
<\infty$ a.s. on $\{\tau_\infty<\infty\}.$
\fi
%\label{eq:pathwise_H1_until_blowup}
The Sobolev embedding
$H^1(\mathbb T_L)\hookrightarrow L^\infty(\mathbb T_L)$
then yields \eqref{eq:pathwise_u_bound}.

% {\color{red}
% To obtain a pathwise differential inequality for the slope, we remove the additive stochastic forcing by introducing}
Now we introduce 
$\widetilde u:=u-Z$ and
$r:=\widetilde u_x=u_x-Z_x$.
Since
$d\widetilde u=F(u)\,dt$,
% differentiation with respect to $x$ gives
\begin{align}
r_t+ur_x
&=
-\frac12u_x^2
+
u^2
-
\Lambda
\left(
u^2+\frac12u_x^2
\right)
-uZ_{xx}.
\label{eq:pathwise_r_equation}
\end{align}
Fix $\omega$ such that $\tau_\infty(\omega)<\infty$, and choose an
integer $J>\tau_\infty(\omega)$. Set
\begin{align}
B
&:=
\sup_{0\le t<\tau_\infty}
\|u(t)\|_{L^\infty},
\quad 
K_J
:=
\sup_{0\le t\le J}
\left(
\|Z_x(t)\|_{L^\infty}
+
\|Z_{xx}(t)\|_{L^\infty}
\right).
\end{align}
By \eqref{eq:pathwise_u_bound} and
\eqref{eq:pathwise_Z_bound},
$B+K_J<\infty$. 
Along the characteristic flow, the positivity of the periodic Green
kernel yields 
$\frac{d}{dt}r(t,q(t,x))
\le
B^2+BK_J.$ 
According to $Z(0)=0$,
$r(0,x)=u_{0x}(x)$, we obtain 
$r(t,q(t,x))
\le
u_{0x}(x)
+
t(B^2+BK_J).$
%\label{eq:pathwise_upper_r_integrated} 
Since $q(t,\cdot)$ is a diffeomorphism,
\begin{align*}
\sup_{x\in\mathbb T_L}r(t,x)
&\le
\|u_{0x}\|_{L^\infty}
+
J(B^2+BK_J),
\qquad
0\le t<\tau_\infty.
%\label{eq:pathwise_r_upper_bound}
\end{align*}
Together with the boundedness of $Z_x$, this proves
\eqref{eq:pathwise_positive_slope}.

By the continuation criterion \eqref{eq:continuous_continuation},
on $\{\tau_\infty<\infty\}$ we have 
$\limsup\limits_{t\uparrow\tau_\infty}
\|u(t)\|_{W^{1,\infty}}
=\infty.$ 
Since \eqref{eq:pathwise_u_bound} and
\eqref{eq:pathwise_positive_slope} imply that
$\|u(t)\|_{L^\infty}$ and $\sup\limits_x u_x(t,x)$ remain bounded on
$[0,\tau_\infty)$, the blow-up can only occur through the negative
part of the slope. Hence
\begin{align}
\liminf_{t\uparrow\tau_\infty}
\inf_{x\in\mathbb T_L}u_x(t,x)
&=-\infty.
\label{eq:pathwise_slope_liminf}
\end{align}
It remains to show that the minimum slope actually tends to $-\infty$. %, %rather than merely along a subsequence. 
Set
$m(t):=\min\limits_{x\in\mathbb T_L}r(t,x)$.
According to 
$\partial_t\widetilde u=F(u)$ and
$r=\widetilde u_x$, we derive 
$\partial_t r
=
\partial_xF(u).$ 
Moreover, since $u\in H^s(\mathbb T_L)$ with $s>5/2$ and 
$F(u)
=
-uu_x-\Lambda\partial_x
\left(u^2+\frac12u_x^2\right),$ 
we obtain $F(u)\in H^{s-1}(\mathbb T_L)$ and 
$\partial_t r
=
\partial_xF(u)
\in H^{s-2}(\mathbb T_L)
$ 
on every compact subinterval of $[0,\tau_\infty)$.
As $\mathbb T_L$ is compact and $r(t,\cdot)\in C^1(\mathbb T_L)$,
the minimum of $r(t,\cdot)$ is attained for every $t$.
%Moreover, the local absolute continuity of $r$ as a $C(\mathbb T_L)$-valued function implies that $m(t):=\min\limits_{x\in\mathbb T_L}r(t,x)$ is locally absolutely continuous.
Let $m(t):=\min\limits_{x\in\mathbb T_L}r(t,x)$. 
% By the envelope theorem for minima over compact
% sets, 
Then for almost every $t$, there exists 
$x_t\in\operatorname*{argmin}\limits_{x\in\mathbb T_L}r(t,x)$ such that
$m'(t)=r_t(t,x_t)$.
Since $x_t$ is a spatial minimizer, % and $r(t,\cdot)\in C^1(\mathbb T_L)$,
we also get $r_x(t,x_t)=0$. 
Evaluating \eqref{eq:pathwise_r_equation} at $x_t$ gives
\begin{align*}
m'(t)
=&
-\frac12
\big(
m(t)+Z_x(t,x_t)
\big)^2
+
u(t,x_t)^2
-
\Lambda
\bigg(
u^2+\frac12u_x^2
\bigg)(t,x_t)
-
u(t,x_t)Z_{xx}(t,x_t).
\end{align*}
Denoting
\begin{align*}
K_{1,J}&:=\sup_{0\le t\le J}\|Z_x(t)\|_{L^\infty},\\
K_{2,J}&:=\sup_{0\le t\le J}\|Z_{xx}(t)\|_{L^\infty},\\
C_J&:=\frac12K_{1,J}^2+B^2+BK_{2,J},
\end{align*}
and using
$(a+b)^2\ge\frac12a^2-b^2$
%and the positivity of $\Lambda$ 
, we obtain 
$m'(t)
\le
-\frac14m(t)^2+C_J.$  
Since $r=u_x-Z_x$ and $Z_x$ remains bounded on
$[0,\tau_\infty)$, we have 
$m(t)
=
\min\limits_{x\in\mathbb T_L}r(t,x)
\le
\inf\limits_{x\in\mathbb T_L}u_x(t,x)
+
\|Z_x(t)\|_{L^\infty}.
$ 
Hence,  \eqref{eq:pathwise_slope_liminf} implies 
$\liminf\limits_{t\uparrow\tau_\infty}m(t)
=
-\infty.$ 
Therefore, we can choose $t_0<\tau_\infty$ such that 
$m(t_0)<-\sqrt{8C_J}.$ 
On the other hand, when 
$m(t)\le-\sqrt{8C_J}$, we have
$C_J\le \frac18m(t)^2$,  which implies 
\begin{align*}
m'(t)
&\le
-\frac14m(t)^2+C_J
\le
-\frac18m(t)^2
<0.
%\label{eq:pathwise_m_strict}
\end{align*}
Therefore, once $m$ enters the region
$(-\infty,-\sqrt{8C_J}]$, it remains in this region and is
strictly decreasing thereafter. 
\iffalse
Since $m$ is locally absolutely continuous, once $m$ falls below
$-\sqrt{8C_J}$, it cannot cross this level again from below.
Consequently,
$m(t)\le m(t_0)<-\sqrt{8C_J},$
for 
$t_0\le t<\tau_\infty,$ 
and $m$ is nonincreasing on $[t_0,\tau_\infty)$.
\fi
Combining this monotonicity with
$\liminf\limits_{t\uparrow\tau_\infty}m(t)=-\infty$,
we conclude that
$\lim\limits_{t\uparrow\tau_\infty}m(t)
=-\infty.$ 
Finally, since $u_x=r+Z_x$ and $Z_x$ remains bounded, for any
$x_t\in\operatorname*{argmin}\limits_{x\in\mathbb T_L}r(t,x)$,
\begin{align*}
\inf_{x\in\mathbb T_L}u_x(t,x)
&\le
u_x(t,x_t)
=
m(t)+Z_x(t,x_t)\le
m(t)+\|Z_x(t)\|_{L^\infty}.
\end{align*}
Therefore, 
$\lim\limits_{t\uparrow\tau_\infty}
\inf\limits_{x\in\mathbb T_L}u_x(t,x)
=
-\infty,$ 
which proves \eqref{eq:pathwise_negative_slope}.
\end{proof}
% {\color{red}

% The proposition shows that every finite time loss of regularity is
% necessarily of wave-breaking type. The solution amplitude remains
% bounded, as does the positive part of the slope, whereas the negative
% part of the slope diverges to $-\infty$.}

Now we provide a sufficient condition
for finite time wave breaking with positive probability in terms of
the initial slope.

\begin{theorem}
\label{thm:continuous_riccati}
Let $u_0\in H_0^s(\mathbb T_L)$, $s>5/2$, be deterministic, and assume
\begin{equation*}
\Phi\in\mathcal L_2(\mathcal U;H_0^{s-2}(\mathbb T_L)).
\end{equation*}
In addition, assume that for every deterministic $T'>0$ such that
$\mathbb P(\tau_\infty>T')=1$,
\begin{align}
\mathbb E
\sup_{0\le t\le T'}
\|u(t)\|_{H^s}^2
<\infty.
\label{eq:continuous_moment_assumption}
\end{align}
For $T>0$, set $C_2(T)
:=
2C_0^2
\left(
\|u_0\|_{H^1}^2+C_\Phi T
\right),$
where $C_0>0$ is the embedding constant in $\|u\|_{L^\infty}
\le C_0\|u\|_{H^1}.$
Suppose that there exists a deterministic $T>0$ and
$x_0\in\mathbb T_L$ such that
$y_0:=u_{0x}(x_0)<-\sqrt{C_2(T)}$
and the corresponding time $T_0$ defined by
$T_0
:=
\frac1{\sqrt{C_2(T)}}
\log
\left(
\frac{
\sqrt{C_2(T)}-y_0
}{
-y_0-\sqrt{C_2(T)}
}
\right)$.
If $T_0<T$, then
\begin{equation}
\mathbb P\left(
\{\tau_\infty\le T_0\}\cap\Omega_{\mathrm{wb}}
\right)>0,
\label{eq:positive_probability_wave_breaking}
\end{equation}
where $\Omega_{\mathrm{wb}}$ is the full-probability event
on which the pathwise wave breaking characterization in
Proposition~\ref{prop:pathwise_wave_breaking} holds. 
\end{theorem}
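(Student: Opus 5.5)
We argue by contradiction. Suppose that $\mathbb P(\{\tau_\infty\le T_0\}\cap\Omega_{\rm wb})=0$. Since $\mathbb P(\Omega_{\rm wb})=1$, this gives $\mathbb P(\tau_\infty>T_0)=1$. We will show that this is incompatible with the Riccati dynamics of $\mathbb E[U(t)]$, where $U(t)=u_x(t,q(t,x_0))$ is the slope along the characteristic issued from $x_0$.

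\textbf{Removing the localization.} Under $\mathbb P(\tau_\infty>T_0)=1$, the moment assumption \eqref{eq:continuous_moment_assumption} applies with $T'=T_0$. The discussion preceding Proposition~\ref{prop:pathwise_wave_breaking} then yields two facts. First, $\mathbb E[U(t)]$ is absolutely continuous on $[0,T_0]$ and satisfies
\[
\frac{d}{dt}\mathbb E[U]=-\tfrac12\mathbb E[U^2]+\mathbb E[u(t,q(t,x_0))^2]-\mathbb E[A(t)]
\quad\text{for a.e. } t.
\]
Second, the energy identity $\mathbb E\|u(t)\|_{H^1}^2=\|u_0\|_{H^1}^2+C_\Phi t$ holds for $t\le T_0$. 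Using $A\ge0$, Jensen's inequality $\mathbb E[U^2]\ge(\mathbb E U)^2$, and the embedding $\mathbb E\|u\|_{L^\infty}^2\le C_0^2\,\mathbb E\|u\|_{H^1}^2\le C_0^2(\|u_0\|_{H^1}^2+C_\Phi T)$ for $t\le T_0<T$, we obtain, with $y(t):=\mathbb E[U(t)]$,
\[
y'(t)\le -\tfrac12 y(t)^2+\tfrac12 C_2(T)\quad\text{for a.e. } t\in[0,T_0],\qquad y(0)=y_0<-\sqrt{C_2(T)}.
\]
This is consistent with the normalization $C_2=2C_0^2(\cdots)$.

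\textbf{Riccati comparison.} Write $K=\sqrt{C_2(T)}$ and $g(y)=-\tfrac12(y^2-K^2)$. The constant $K$ is an equilibrium of $y'=g(y)$, and $y(0)<-K$. Since $g<0$ below $-K$, the set $\{y<-K\}$ is forward invariant for absolutely continuous subsolutions. On this set we separate variables in the inequality $y'/(y^2-K^2)\le-\tfrac12$. A partial-fraction integration then gives
\[
\frac{y(t)-K}{y(t)+K}\le \frac{y_0-K}{y_0+K}\,e^{-Kt},
\]
that is, $y(t)\to-\infty$ no later than the blow-up time of the comparison solution. That blow-up time is $\frac1K\log\frac{K-y_0}{-y_0-K}$; depending on the factor $\tfrac12$ this equals $T_0$, or $T_0$ bounds it from above.

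\textbf{Contradiction and main obstacle.} The comparison forces $y(t)=\mathbb E[U(t)]\to-\infty$ at some $t\le T_0$. But $|U(t)|\le C\|u(t)\|_{H^s}$, so \eqref{eq:continuous_moment_assumption} gives $\sup_{t\le T_0}|\mathbb E U(t)|<\infty$, which is a contradiction. Hence $\mathbb P(\tau_\infty\le T_0)>0$, and intersecting with the full-measure event $\Omega_{\rm wb}$ gives \eqref{eq:positive_probability_wave_breaking}.

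The delicate step is the constant bookkeeping in the comparison. One must check that $T_0$ as defined is at least the comparison blow-up time under the exact coefficient $\tfrac12$ in front of $U^2$. If the factor is off, the inequality may have to be rescaled: setting $\tilde y=y/2$ turns $y'\le-\tfrac12y^2+\tfrac12C_2$ into $\tilde y'\le-\tilde y^2+C_2/4$, and we would then match against the stated formula for $T_0$. A second technical point is rigor of the comparison for an absolutely continuous $y$ with an a.e. inequality. We will handle it by applying the comparison to $h(y(t))$, where $h(y)=\frac1K\log\frac{y-K}{y+K}$, which is absolutely continuous while $y<-K$ and satisfies $\frac{d}{dt}h(y(t))\le-\tfrac12$ a.e.
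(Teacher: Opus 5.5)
Your proposal is correct and follows the paper's proof essentially step for step: the contradiction hypothesis $\mathbb P(\tau_\infty>T_0)=1$, the moment bound making $y=\mathbb E[U]$ bounded and absolutely continuous, Jensen together with $A\ge0$ and the averaged energy law giving $y'\le-\frac12y^2+\frac12C_2(T)$, and a Riccati comparison; the only difference is that you compare through $h(y)=\frac1K\log\frac{y-K}{y+K}$ rather than the paper's integrating-factor argument for $w=y-z$. The constant bookkeeping you flag is in fact exact, but note that for this $h$ one has $\frac{d}{dt}h(y(t))\le-1$, not $-\frac12$ (using only $-\frac12$ would give the time $2T_0$ and no contradiction); the sharp bound $-1$ is exactly your displayed inequality, whose right-hand side reaches $1$ at $t=\frac1K\log\frac{K-y_0}{-y_0-K}=T_0$, so no rescaling or hedging is needed.
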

\iffalse
\begin{equation}
\begin{aligned}
\mathbb P\Big(&
\tau_\infty\le T_0
,\;
\sup_{0\le t<\tau_\infty}
\|u(t)\|_{L^\infty}<\infty,\\
&\sup_{0\le t<\tau_\infty}
\sup_{x\in\mathbb T_L} u_x(t,x)<\infty,\;
\lim_{t\uparrow\tau_\infty}
\inf_{x\in\mathbb T_L}u_x(t,x)
=-\infty
\Big)
>0.
\end{aligned}
\label{eq:positive_probability_wave_breaking}   
\end{equation}
\fi

\begin{proof}
Let $U(t):=u_x(t,q(t,x_0)),
y(t):=\mathbb E[U(t)].$
Assume, for contradiction, that $\mathbb P(\tau_\infty>T_0)=1.$
Then $U(t)$ is well defined on $[0,T_0]$ almost surely.
Applying \eqref{eq:continuous_moment_assumption} with $T'=T_0$ yields
\begin{align}
\sup_{0\le t\le T_0}|y(t)|
&\le
C
\mathbb E
\sup_{0\le t\le T_0}
\|u(t)\|_{H^s}
<\infty.
\label{eq:expected_slope_bounded}
\end{align}
On the other hand, 
by Jensen's inequality,
$\mathbb E[U(t)^2]
\ge
(\mathbb E[U(t)])^2 .$ 
Since $A(t)\ge0$, it follows that, for almost every
$t\in[0,T_0)$,
\begin{align}
y'(t)
&\le
-\frac12\mathbb E[U(t)^2]
+
\mathbb E
\left[
u(t,q(t,x_0))^2
\right]
\le
-\frac12y(t)^2
+
C_0^2
\mathbb E\|u(t)\|_{H^1}^2
\nonumber\\
&\le
-\frac12y(t)^2
+
C_0^2
\left(
\|u_0\|_{H^1}^2+C_\Phi T
\right)
=
-\frac12y(t)^2+\frac12C_2(T).
\label{eq:continuous_expected_riccati}
\end{align}

Let $c:=\sqrt{C_2(T)}$ and $z$ solve
\begin{align}
z'(t)
&=
-\frac12z(t)^2+\frac12c^2,
\qquad
z(0)=y_0.
\label{eq:continuous_comparison_ode}
\end{align} 
Since $y_0<-c$ and 
$z(t)
=
-c
\frac{
(c-y_0)+(-c-y_0)e^{ct}
}{
(c-y_0)-(-c-y_0)e^{ct}
},$ 
the denominator vanishes at the finite time $T_0
=
\frac1c
\log
\left(
\frac{c-y_0}{-y_0-c}
\right).$
Hence, $\lim\limits_{t\uparrow T_0}z(t)=-\infty.$ 
Set $w:=y-z$.
Due to \eqref{eq:continuous_expected_riccati} and
\eqref{eq:continuous_comparison_ode},
\begin{align*}
w'(t)
&\le
-\frac12
\bigl(
y(t)^2-z(t)^2
\bigr)
=
-\frac12
\bigl(
y(t)+z(t)
\bigr)w(t), \quad w(0)=0,
\end{align*}
for almost every $t<T_0$.
As a result, 
$\frac{d}{dt}
\left[
\exp
\left(
\frac12\int_0^t
(y(r)+z(r))\,dr
\right)
w(t)
\right]
\le0.$ 
It follows that
$w(t)\le0$, that is, 
$y(t)\le z(t),$
for 
$0\le t<T_0.$ 
Since $z(t)\to-\infty$ as $t\uparrow T_0$, this implies
$y(t)\to-\infty$, contradicting
\eqref{eq:expected_slope_bounded}.
Hence, the assumption
$\mathbb P(\tau_\infty>T_0)=1$ is impossible and  $\mathbb P(\tau_\infty\le T_0)>0.$

By Proposition~\ref{prop:pathwise_wave_breaking}, there exists a
full probability event $\Omega_{\mathrm{wb}}$ such that, on
$\Omega_{\mathrm{wb}}$, every finite time breakdown satisfies $\sup\limits_{0\le t<\tau_\infty}\|u(t)\|_{L^\infty}<\infty,$ $
\sup\limits_{0\le t<\tau_\infty}\sup_{x\in\mathbb T_L}u_x(t,x)<\infty,$
and $\lim\limits_{t\uparrow\tau_\infty}
\inf_{x\in\mathbb T_L}u_x(t,x)=-\infty.$ 
Since $\mathbb P(\Omega_{\mathrm{wb}})=1$ and
$\mathbb P(\tau_\infty\le T_0)>0$,
$\mathbb P\bigl(
\Omega_{\mathrm{wb}}\cap\{\tau_\infty\le T_0\}
\bigr)>0.$
This proves \eqref{eq:positive_probability_wave_breaking}.
\end{proof}

\iffalse
Proposition~\ref{prop:pathwise_wave_breaking} holds on an event of
probability one and identifies every finite-time breakdown as
bounded-amplitude wave breaking. Intersecting this event with
$\{\tau_\infty\le T_0\}$ yields
\eqref{eq:positive_probability_wave_breaking}.
\fi

\iffalse
Theorem~\ref{thm:continuous_riccati} shows that a sufficiently steep
negative initial slope leads to finite-time wave breaking with positive
probability. 
\fi

\section{Structure-preserving fully discrete method}
\label{sec:discretization}

In this section, we construct a structure-preserving fully discrete
method of \eqref{eq:SCH_original}. We first employ a
Fourier--Galerkin discretization in space and then apply a splitting
symplectic Runge--Kutta method in time.  The resulting method reproduces the discrete averaged energy law and
the multi-symplectic structure, while retaining the dominant
Riccati-type instability of the characteristic slope under the
regularity assumptions specified below.

For $N\in\mathbb N_+$, define %the zero-mean Fourier space
$S_N
:=
\big\{
v(x)=\sum\limits_{k=-N}^{N}\widehat v(k)
e^{\mathrm i k\mu(x-a)}:
\widehat v(0)=0,\quad \widehat v(-k)=\overline{\widehat v(k)}
\big\}
\subset H_0^s(\mathbb T_L),$
where $\mu=2\pi/L$. Let $P_N:L^2(\mathbb T_L)\to S_N$ be the
orthogonal projection 
$P_N f(x)
=
\sum\limits_{0<|k|\le N}\widehat f(k)
e^{\mathrm i k\mu(x-a)}.$ 
% For zero-mean $f$, this is the usual Fourier truncation to the modes
% $0<|k|\le N.$
\iffalse
{\color{red}
For $N\in\mathbb N_+$, let $S_N\subset H_0^s(\mathbb T_L)$ denote} the
space of real-valued trigonometric polynomials with Fourier modes
$|k|\le N$. Equivalently, 
each $u_N\in S_N$ can be represented as
\begin{align*}
u_N(t,x)
=
\sum\limits_{|k|\le N}
\widehat u_k(t)e^{\mathrm i k\mu(x-a)},
\qquad
\widehat u_{-k}(t)
=
\overline{\widehat u_k(t)}.
\end{align*}

{\color{red}Let $P_N:L^2(\mathbb T_L;\mathbb R)\to S_N$ 
denote the $L^2$-orthogonal projection onto $S_N$, defined by}
$P_Nf(x)
=
\sum_{|k|\le N}
\widehat f(k)e^{\mathrm i k\mu(x-a)}.$ 
\fi
We first consider a spatially semi-discrete method. 
Specifically, we seek $u_N(t)\in S_N$, with $u_N(0)=P_Nu_0$, such that for every $h\in S_N$,
\begin{align}
(du_N,h)
+
\big(
\frac12\partial_x(u_N^2),h
\big)dt
+
\big(
\partial_x\Lambda
\big(
u_N^2+\frac12u_{N,x}^2
\big),h
\big)dt
=
(P_N\Lambda\Phi\,dW(t),h), 
\label{eq:galerkin-weak}
\end{align}
% Since $P_N$ is the $L^2$-orthogonal projection onto $S_N$, we have $(f,h)=(P_Nf,h)$ for $f\in L^2(\mathbb T_L;\mathbb R)$ and $h\in S_N$.
% Therefore, \eqref{eq:galerkin-weak} 
which is equivalent to 
\begin{align}
du_N
+
P_N
\left[
\frac12\partial_x(u_N^2)
+
\partial_x\Lambda
\left(
u_N^2+\frac12u_{N,x}^2
\right)
\right]dt
=
P_N\Lambda\Phi\,dW(t).
\label{eq:galerkin-strong}
\end{align} 
Since $P_N$, $\Lambda$, and $\partial_x$ %are Fourier multiplier operators {\color{red}on the periodic domain,} they 
commute, %. Consequently,
we obtain 
\begin{align}
du_N
+
\left[
\frac12P_N\partial_x(u_N^2)
+
\partial_x\Lambda P_N
\left(
u_N^2+\frac12u_{N,x}^2
\right)
\right]dt
=
P_N\Lambda\Phi\,dW(t).
\label{eq:galerkin-strong-PN}
\end{align} 
Based on $F(u)
=
-u u_x
-\Lambda\partial_x
\left(
u^2+\frac12u_x^2
\right),$ 
the semi-discrete  system can be written as
\begin{align}
du_N
=
P_NF(u_N)\,dt
+
P_N\Lambda\Phi\,dW(t),
\qquad
u_N(0)=P_Nu_0,
\label{eq:galerkin-compact}
\end{align}
%For each fixed $N$, the projected equation \eqref{eq:galerkin-compact} is an $S_N$-valued finite-dimensional stochastic differential equation. 
whose drift coefficient is locally Lipschitz continuous on
$S_N$. 
It can be verified that \eqref{eq:galerkin-compact}  admits a unique maximal adapted solution 
$u_N$ up to a possible explosion time $\tau_N$.
In particular, when $\Phi=0$, \eqref{eq:galerkin-strong-PN} reduces to the semi-discrete deterministic multi-symplectic system 
\begin{align}
\label{Det;multi-symplecitc}
M\,d Z_N
+
K\partial_x Z_N\,dt
=
P_N\nabla_ZS_1(Z_N)dt,
\qquad
Z_N=(u_N,\varphi_N,w_N,v_N,p_N)^\top,
\end{align}
where $M^\top=-M$ and $K^\top=-K$. Its component form is
\begin{align*}
\left\{
\begin{aligned}
&\frac12d\varphi_N
-\frac12d p_N
-\partial_xv_Ndt
=
-w_Ndt
-
P_N
\left(
\frac32u_N^2+\frac12p_N^2
\right)dt,
\\
&-\frac12du_N+\partial_xw_Ndt=0,
\quad \partial_x\varphi_N=u_N,
\quad p_N=\partial_xu_N,\\
& \frac12d u_N
=
-P_N(u_Np_N)dt+v_Ndt.
\end{aligned}
\right.
\end{align*}
Eliminating the auxiliary variables  and employing the commutativity of $P_N$ with $\partial_x$ and $\Lambda$, we arrive at 
\begin{align*}
(1-\partial_x^2)d u_N
=
P_N
\left[
-3u_N\partial_xu_N
+
2(\partial_xu_N)(\partial_x^2u_N)
+
u_N\partial_x^3u_N
\right]dt.
\end{align*}

Let $N_t\in\mathbb N_+$ be the number of time steps and set
$
\tau=T/N_t,
t_k=k\tau,
k=0,\ldots,N_t.
$
%Here, $N$ denotes the Fourier truncation parameter, whereas $N_t$denotes the number of time steps. 
We denote by
$\mathbf Z_N^k
=
(U_N^k,\varphi_N^k,w_N^k,v_N^k,p_N^k)^\top$
the fully discrete approximation to $Z_N(t_k)$ with 
$U_N^0=P_Nu_0$,
$p_N^0=\partial_xU_N^0$, and
$\varphi_N^0=\partial_x^{-1}U_N^0$.
Thus
$\partial_x\varphi_N^0=U_N^0$ and
$\int_{\mathbb T_L}\varphi_N^0(x)\,dx=0$.
Furthermore,
$w_N^0=-P_N\bigl[\frac14(u_N^0)^2+
\frac12\Lambda((u_N^0)^2+\frac12(p_N^0)^2)\bigr]$ and
$v_N^0=P_N(u_N^0p_N^0)+\frac12F_N(u_N^0)$,
where
$F_N(u)=-P_N[u\,u_x+\partial_x\Lambda
(u^2+\frac12u_x^2)]$. 
% Using the algebraic constraints to eliminate $w$, $v$, $\varphi$,
% and $p$, we obtain the finite-dimensional ODE  $\frac{dU_N}{dt}=F_N(U_N)$
% for $U_N,$ 
% where $F_N(U)
% =
% -P_N\left[
% \frac{1}{2}\partial_x(U^2)
% +
% \partial_x\Lambda
% \left(
% U^2+\frac{1}{2}U_x^2
% \right)
% \right].$
On each interval $[t_k,t_{k+1}]$, 
we apply an implicit Runge--Kutta method to multi-symplectic system \eqref{Det;multi-symplecitc} and then combine with the exact solution of 
the additive stochastic subsystem. 
\iffalse
On each interval $[t_k,t_{k+1}]$, we first apply the Runge--Kutta
method to \eqref{Det;multi-symplecitc} and then integrate
the additive stochastic subsystem exactly. 
\fi
In detail, let
$\mathbf A=(a_{ij})_{i,j=1}^s,$ and 
$b=(b_1,\ldots,b_s)^\top,$ 
be the coefficients of an $s$-stage Runge--Kutta method. We assume that the Butcher matrix $\mathbf A$ is invertible, and
\begin{equation}\label{eq:RK-symplectic-condition}
b_ia_{ij}+b_ja_{ji}=b_ib_j,
\qquad \sum\limits_{i=1}^s b_i=1, \qquad
i,j=1,\ldots,s.
\end{equation} 
% {\color{red}The invertibility of $\mathbf A$ will be used below to show that the
% algebraic constraints are inherited at every Runge--Kutta stage.}
Based on the numerical approximation $\mathbf Z_N^k$ 
satisfying
\begin{align*}
\partial_x\varphi_N^k&=U_N^k,\qquad
p_N^k=\partial_xU_N^k,\\
\int_{\mathbb T_L}\varphi_N^k(x)\,dx&=0,
\end{align*}
we have
\begin{equation*}
\bar{\mathbf Z}_N^{k+1}
=\left(
\bar U_N^{k+1},
\bar\varphi_N^{k+1},
\bar w_N^{k+1},
\bar v_N^{k+1},
\bar p_N^{k+1}
\right)^\top
\end{equation*}
satisfying
\begin{align}
\left\{
\begin{aligned}
&\mathbf Z_{N,i}^k
=
\mathbf Z_N^k
+
\tau\sum\limits_{j=1}^s
a_{ij}\mathbf Q_{N,j}^k,\qquad i=1,\ldots,s,\\
&M\mathbf Q_{N,i}^k
+
K\partial_x\mathbf Z_{N,i}^k
=
P_N\nabla_ZS_1(\mathbf Z_{N,i}^k),
\qquad i=1,\ldots,s,\\
&\bar{\mathbf Z}_N^{k+1}
=
\mathbf Z_N^k
+
\tau\sum\limits_{i=1}^s
b_i\mathbf Q_{N,i}^k,
\end{aligned}
\right.
\label{eq:multisymplectic-RK-stages}
\end{align}
where $\mathbf Z_{N,i}^k$ and $\mathbf Q_{N,i}^k$ denote the
Runge--Kutta stage value and stage derivative, respectively.  
%By the standard local solvability theory for implicit Runge--Kutta methods, there exists, for sufficiently small $\tau$, a unique stage solution of \eqref{eq:multisymplectic-RK-stages} in a neighborhood of the current state. 
% The auxiliary stage variables are then uniquely recovered from the
% algebraic constraints and the zero-mean normalization. 
%The resulting stage variables are $\mathcal F_{t_k}$-measurable whenever $\mathbf Z_N^k$ is $\mathcal F_{t_k}$-measurable. 
Define 
$\mathbf Z_{N,i}^k
:=
(
U_{N,i}^k,
\varphi_{N,i}^k,
w_{N,i}^k,
v_{N,i}^k,
p_{N,i}^k
)^\top,$ and 
$\mathbf Q_{N,i}^k
:=
(
Q_{N,i}^{u,k},
Q_{N,i}^{\varphi,k},
Q_{N,i}^{w,k},$  $
Q_{N,i}^{v,k},
Q_{N,i}^{p,k}
)^\top.$ 
% The algebraic constraints are inherited at every Runge--Kutta stage.  
Then we have  
$\partial_x\varphi_{N,i}^k
=
U_{N,i}^k,$ 
$p_{N,i}^k
=
\partial_xU_{N,i}^k,$ for $i=1,\ldots,s.$ 
Since $U_{N,i}^k$ has zero spatial mean, the periodic relation
$\partial_x\varphi_{N,i}^k=U_{N,i}^k$ determines
$\varphi_{N,i}^k$ only up to an additive constant. We fix this
constant by imposing 
$\int_{\mathbb T_L}
\varphi_{N,i}^k(x)\,dx
=
0,$ $i=1,\ldots,s.$ 
Furthermore, the corresponding Runge--Kutta stage relations are
\begin{align*}
U_{N,i}^k
=
U_N^k
+
\tau\sum_{j=1}^s
a_{ij}Q_{N,j}^{u,k},
\;
\varphi_{N,i}^k
=
\varphi_N^k
+
\tau\sum_{j=1}^s
a_{ij}Q_{N,j}^{\varphi,k},\;
p_{N,i}^k
=
p_N^k
+
\tau\sum_{j=1}^s
a_{ij}Q_{N,j}^{p,k}.
\end{align*}
From the constraints at the Runge--Kutta stages it follows that 
\begin{align*}
\sum_{j=1}^s
a_{ij}
\left(
\partial_xQ_{N,j}^{\varphi,k}
-
Q_{N,j}^{u,k}
\right)
=
0,\quad 
\sum_{j=1}^s
a_{ij}
\left(
Q_{N,j}^{p,k}
-
\partial_xQ_{N,j}^{u,k}
\right)
=
0, \qquad i=1,\ldots,s.
\end{align*}
Since the Butcher matrix $\mathbf A=(a_{ij})_{i,j=1}^s$ is invertible,
we obtain 
$\partial_xQ_{N,i}^{\varphi,k}
=
Q_{N,i}^{u,k},$ 
$Q_{N,i}^{p,k}
=
\partial_xQ_{N,i}^{u,k}$, 
$i=1,\ldots,s.$ 
Eliminating the auxiliary stage variables gives
\begin{align*}
(1-\partial_x^2)Q_{N,i}^{u,k}
=
P_N
\left[
-3U_{N,i}^k\partial_xU_{N,i}^k
+
2(\partial_xU_{N,i}^k)
(\partial_x^2U_{N,i}^k)
+
U_{N,i}^k\partial_x^3U_{N,i}^k
\right].
\end{align*}
Equivalently,
\begin{align}
Q_{N,i}^{u,k}
=
-P_N
\left[
\frac12\partial_x\bigl((U_{N,i}^k)^2\bigr)
+
\partial_x\Lambda
\left(
(U_{N,i}^k)^2
+
\frac12(\partial_xU_{N,i}^k)^2
\right)
\right].
\label{eq:deterministic-stage-slope}
\end{align}
Since both $\varphi_N^k$ and $\varphi_{N,i}^k$ have zero spatial mean,
the Runge--Kutta stage relations and the invertibility of $\mathbf A$
imply 
$\int_{\mathbb T_L}
Q_{N,i}^{\varphi,k}(x)\,dx
=
0,$ $i=1,\ldots,s.$ 
Consequently, 
$\int_{\mathbb T_L}
\bar\varphi_N^{k+1}(x)\,dx
=
0.$ 
Moreover, we obtain
\begin{align}
&U_{N,i}^k
=
U_N^k
-
\tau\sum\limits_{j=1}^s
a_{ij}P_N
\left[
\frac12\partial_x
\bigl((U_{N,j}^k)^2\bigr)
+
\partial_x\Lambda
\left(
(U_{N,j}^k)^2
+
\frac12(\partial_xU_{N,j}^k)^2
\right)
\right],
\label{eq:deterministic-RK-stages}\\
&\bar U_N^{k+1}
=
U_N^k
-
\tau\sum\limits_{i=1}^s
b_iP_N
\left[
\frac12\partial_x
\bigl((U_{N,i}^k)^2\bigr)
+
\partial_x\Lambda
\left(
(U_{N,i}^k)^2
+
\frac12(\partial_xU_{N,i}^k)^2
\right)
\right],
\label{eq:deterministic-RK-update}
\end{align} 
% The stage-slope identities imply that the deterministic endpoint remains constraint-consistent: 
and $\partial_x\bar\varphi_N^{k+1}=\bar U_N^{k+1}$ and $\bar p_N^{k+1}=\partial_x\bar U_N^{k+1}$. 
For completeness, we justify the local solvability of the Runge--Kutta
stage equations. For fixed $N$, equation~(3.9) defines a finite-dimensional
nonlinear system on $S_N^s$. Let
$\mathbf{U}=(U_1,\ldots,U_s)\in S_N^s$ and define
$\mathcal{G}:S_N^s\times\mathbb{R}\to S_N^s$ componentwise by
\begin{align*}
\mathcal{G}_i(\mathbf{U},\tau)
&=
U_i-U_N^k
+
\tau\sum_{j=1}^s a_{ij}P_N
\left[
\frac{1}{2}\partial_x(U_j^2)
+
\partial_x\Lambda
\left(
U_j^2+\frac{1}{2}(\partial_xU_j)^2
\right)
\right],
\end{align*}
where $i=1,\ldots,s.$
At $\tau=0$, we have
\begin{align*}
\mathcal{G}
\bigl((U_N^k,\ldots,U_N^k),0\bigr)=0,
\quad 
D_{\mathbf{U}}\mathcal{G}
\bigl((U_N^k,\ldots,U_N^k),0\bigr)=I_{S_N^s}.
\end{align*}
Since $\mathcal{G}$ is smooth on the finite-dimensional space $S_N^s$,
the implicit function theorem yields, for sufficiently small $\tau$, a
unique stage solution $\{U_{N,i}^k\}_{i=1}^s$ in a neighborhood of
$(U_N^k,\ldots,U_N^k)$.
Moreover, whenever $Z_N^k$ is $\mathcal{F}{t_k}$-measurable, the locally unique stage variables are also $\mathcal{F}{t_k}$-measurable.

Let $G_N=P_N\Lambda\Phi$ and 
$\Delta W_k=W(t_{k+1})-W(t_k)$.
% Under the zero-mean assumption on the noise coefficient,
% $G_N\Delta W_k$ has zero spatial mean. Hence
% $\partial_x^{-1}G_N\Delta W_k$ is well defined.
% 
Based on the exact solution of the stochastic subsystem over $[t_k,t_{k+1}]$
% is additive and can therefore be integrated exactly through
\begin{align*}
dU_N
=
G_N\,dW(t),\;
d\varphi_N
=
\partial_x^{-1}G_N\,dW(t),\;
dw_N
=
0,\;
dv_N
=
0,\;
dp_N=
\partial_xG_N\,dW(t).
\end{align*} 
% Because all the coefficients in this subsystem are independent of the
% phase variables, 
we present the fully discrete method
\begin{align}
\left\{
\begin{aligned}
&U_N^{k+1}
=
\bar U_N^{k+1}
+
G_N\Delta W_k,\quad \varphi_N^{k+1}
=
\bar\varphi_N^{k+1}
+
\partial_x^{-1}G_N\Delta W_k,\\
&p_N^{k+1}
=
\bar p_N^{k+1}
+
\partial_xG_N\Delta W_k,\quad w_N^{k+1}
=
\bar w_N^{k+1},\quad
v_N^{k+1}
=
\bar v_N^{k+1}.
\end{aligned}
\right.
\label{eq:exact-stochastic-update}
\end{align}
As a result, 
% the stochastic update gives 
$\int_{\mathbb T_L}
\varphi_N^{k+1}(x)\,dx
=
0,$ and
%Moreover,
\begin{align*}
\partial_x\varphi_N^{k+1}
=
\partial_x\bar\varphi_N^{k+1}
+
G_N\Delta W_k
=
U_N^{k+1},\;
p_N^{k+1}
=
\bar p_N^{k+1}
+
\partial_xG_N\Delta W_k
=
\partial_xU_N^{k+1}.
\end{align*}
Hence the fully discrete method  preserves both algebraic constraints, i.e., 
$
\partial_x\varphi_N^{k+1}=U_N^{k+1}$ and $
p_N^{k+1}=\partial_xU_N^{k+1},
$
as well as the zero-mean normalization of $\varphi_N^{k+1}$.

\subsection{Discrete averaged energy law and stochastic geometric structure}
\label{subsec:discrete_structure} 
We first derive the discrete averaged energy evolution law of 
% the fully discrete method 
\eqref{eq:exact-stochastic-update}.
\begin{proposition}
\label{prop:fully_discrete_energy}
Assume that the  coefficients of the fully discrete method 
\eqref{eq:exact-stochastic-update} satisfy the symplectic condition 
\eqref{eq:RK-symplectic-condition}.  Then  \eqref{eq:exact-stochastic-update}  satisfies
\begin{align}
\mathbb E\|U_N^k\|_{H^1}^2
=
\mathbb E\|U_N^0\|_{H^1}^2
+t_k\|G_N\|_{\mathcal L_2(\mathcal U;H^1)}^2, \quad k=0,1,\ldots,N_t.
\label{eq:fully_discrete_energy}
\end{align}
\end{proposition}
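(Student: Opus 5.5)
The plan is to prove the discrete law one step at a time, in two stages. First, the deterministic Runge--Kutta substep conserves the $H^1$ norm exactly. Second, the exact stochastic update increases the expected $H^1$ norm by exactly $\tau\|G_N\|_{\mathcal L_2(\mathcal U;H^1)}^2$. Summing over $k$ then gives \eqref{eq:fully_discrete_energy}.

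\textbf{Step 1: the deterministic substep conserves the quadratic invariant.} The key observation is that the deterministic semi-discrete flow $\dot U=F_N(U)$, with $F_N(U)=-P_N[UU_x+\partial_x\Lambda(U^2+\frac12U_x^2)]$, conserves the quadratic form $\|U\|_{H^1}^2=((1-\partial_x^2)U,U)$. To check this, note that $P_N$ commutes with $1-\partial_x^2$ and that $U\in S_N$. Together with \eqref{eq:deterministic-stage-slope}, this gives
\begin{equation*}
\bigl((1-\partial_x^2)U,F_N(U)\bigr)=-\bigl(U,(1-\partial_x^2)(UU_x)+\partial_x(U^2+\tfrac12U_x^2)\bigr)=-\bigl(U,3UU_x-2U_xU_{xx}-UU_{xxx}\bigr).
\end{equation*}
This vanishes by periodic integration by parts, exactly as for the continuous CH energy: $\int U^2U_x=0$, and $\int(2UU_xU_{xx}+U^2U_{xxx})=\int\partial_x(U^2U_{xx})=0$.

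It is then classical that a Runge--Kutta method satisfying \eqref{eq:RK-symplectic-condition} preserves every quadratic invariant. Write $\langle a,b\rangle:=((1-\partial_x^2)a,b)$ and $K_i:=F_N(U_{N,i}^k)$. Expanding $\|\bar U_N^{k+1}\|^2$ using $\bar U_N^{k+1}=U_N^k+\tau\sum_ib_iK_i$ gives
\begin{equation*}
\|\bar U_N^{k+1}\|^2-\|U_N^k\|^2=2\tau\sum_ib_i\langle U_N^k,K_i\rangle+\tau^2\sum_{i,j}b_ib_j\langle K_i,K_j\rangle .
\end{equation*}
Substituting $U_N^k=U_{N,i}^k-\tau\sum_ja_{ij}K_j$ into the first sum turns it into $2\tau\sum_ib_i\langle U_{N,i}^k,K_i\rangle$ plus cross terms. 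The $\langle U_{N,i}^k,K_i\rangle$ terms vanish by the invariance just shown. The cross terms combine to $\tau^2\sum_{i,j}(b_ib_j-b_ia_{ij}-b_ja_{ji})\langle K_i,K_j\rangle=0$ by \eqref{eq:RK-symplectic-condition}. Hence $\|\bar U_N^{k+1}\|_{H^1}=\|U_N^k\|_{H^1}$ pathwise, wherever the stages exist.

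\textbf{Step 2: the exact stochastic substep.} From $U_N^{k+1}=\bar U_N^{k+1}+G_N\Delta W_k$ we get
\begin{equation*}
\|U_N^{k+1}\|_{H^1}^2=\|\bar U_N^{k+1}\|_{H^1}^2+2(\bar U_N^{k+1},G_N\Delta W_k)_{H^1}+\|G_N\Delta W_k\|_{H^1}^2 .
\end{equation*}
The stage variables, and hence $\bar U_N^{k+1}$, are $\mathcal F_{t_k}$-measurable, while $\Delta W_k$ is independent of $\mathcal F_{t_k}$ with mean zero. Conditioning on $\mathcal F_{t_k}$ therefore kills the cross term. The last term has expectation $\tau\|G_N\|_{\mathcal L_2(\mathcal U;H^1)}^2$ by the It\^o isometry for the cylindrical increment, using $\sum_j\|G_Ne_j\|_{H^1}^2<\infty$ (which holds since $G_N$ has finite rank). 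So
\begin{equation*}
\mathbb E\|U_N^{k+1}\|_{H^1}^2=\mathbb E\|U_N^k\|_{H^1}^2+\tau\|G_N\|_{\mathcal L_2(\mathcal U;H^1)}^2 ,
\end{equation*}
and induction on $k$ with $t_k=k\tau$ gives \eqref{eq:fully_discrete_energy}.

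\textbf{Main obstacle: integrability.} The cross term has zero mean only if $\mathbb E\|\bar U_N^{k+1}\|_{H^1}^2<\infty$, and Step 1 yields this by induction, since the energy is conserved exactly in the deterministic substep. What needs care is that the implicit stage equations are solvable for the realized state, because local solvability only holds for small $\tau$ relative to the size of $U_N^k$. I would handle this by taking the scheme to be well defined along the trajectory, as implicitly assumed in the statement. On a finite-dimensional $S_N$, norm conservation keeps the stages in a bounded set determined by $\|U_N^k\|_{H^1}$. I would note that, if necessary, the identity can be read on the event where the scheme is defined.
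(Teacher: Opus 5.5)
Your proposal is correct and follows the paper's proof. You show $\langle U,F_N(U)\rangle_{H^1}=0$ by periodic integration by parts, deduce $\|\bar U_N^{k+1}\|_{H^1}=\|U_N^k\|_{H^1}$ from \eqref{eq:RK-symplectic-condition}, then remove the cross term via $\mathcal F_{t_k}$-measurability and apply the It\^o isometry; you simply write out the integration by parts and the quadratic-invariant Runge--Kutta computation in more detail than the paper does.

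One small caveat concerns your fallback of ``reading the identity on the event where the scheme is defined.'' That only yields a localized one-step relation, and summing it over shrinking events gives an inequality rather than the identity. So the exact linear law genuinely needs almost-sure solvability of the stages, which both you and the paper take for granted.
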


\begin{proof}
For $F_N(U)=-P_N\left[\frac12\partial_x(U^2)+\partial_x\Lambda\left(U^2+\frac12(\partial_xU)^2\right)\right]$, 
using periodic integration by parts and the identity
$
\langle u,v\rangle_{H^1}
=((1-\partial_x^2)u,v),
$
one obtains $\langle U,F_N(U)\rangle_{H^1}=0.$
Based on \eqref{eq:RK-symplectic-condition}, we have $\|\bar U_N^{k+1}\|_{H^1}^2
=
\|U_N^k\|_{H^1}^2.$ 
By means of
$U_N^{k+1}=\bar U_N^{k+1}+G_N\Delta W_k$, we arrive at
\begin{align*}
\|U_N^{k+1}\|_{H^1}^2
={}&
\|U_N^k\|_{H^1}^2
+2\langle \bar U_N^{k+1},G_N\Delta W_k\rangle_{H^1}
+\|G_N\Delta W_k\|_{H^1}^2.
\end{align*}
Because %$\bar U_N^{k+1}$ is $\mathcal F_{t_k}$-measurable, 
the cross term
has zero expectation, 
$\mathbb E\|G_N\Delta W_k\|_{H^1}^2
=\tau\|G_N\|_{\mathcal L_2(\mathcal U;H^1)}^2.$ 
As a consequence,  $\mathbb E\|U_N^{k+1}\|_{H^1}^2
=
\mathbb E\|U_N^k\|_{H^1}^2
+
\tau
\|G_N\|_{\mathcal L_2(\mathcal U;H^1)}^2 .$
Summing this identity from $0$ to $k-1$ yields
\eqref{eq:fully_discrete_energy}. 
\end{proof}

%{\color{red}We next prove that the proposed fully discrete method preserves a discrete stochastic multi-symplectic  conservation law.}
Now we define
\begin{align*}
\Omega_N(\mathbf Z_N)
=
\frac12
\int_{\mathbb T_L}
\mathbf d\mathbf Z_N
\wedge
M\mathbf d\mathbf Z_N\,dx,\quad
\mathcal K_N(\mathbf Z_N)
=
\frac12
\mathbf d\mathbf Z_N
\wedge
K\mathbf d\mathbf Z_N,
\end{align*}
% {\color{red}where $\mathbf d$ denotes the exterior derivative with respect
% to the phase variables.}
and  set 
$\mathcal B_N
=(
G_N,
\partial_x^{-1}G_N,
0,0,
\partial_xG_N)^\top.$ 
Then the fully discrete method \eqref{eq:exact-stochastic-update} can be written as
\begin{align}
\mathbf Z_N^{k+1}
=
\bar{\mathbf Z}_N^{k+1}
+
\mathcal B_N\Delta W_k.
\label{eq:exact-additive-stochastic-flow}
\end{align}
%The operator $\mathcal B_N$ is independent of the phase variables.

\begin{theorem}
Suppose that the  coefficients of the fully discrete method 
\eqref{eq:exact-stochastic-update} satisfy the symplectic condition 
\eqref{eq:RK-symplectic-condition}. 
In addition, assume that the Butcher matrix
$\mathbf A$ is invertible and that $\tau$ is sufficiently small so
that the locally unique stage solution described above exists. Then, almost surely,
\begin{align*}
\Omega_N(\mathbf Z_N^{k+1})
-
\Omega_N(\mathbf Z_N^k)
+
\tau\sum_{i=1}^s
b_i
\int_{\mathbb T_L}
\partial_x
\mathcal K_N(\mathbf Z_{N,i}^k)\,dx
=
0,\quad k=0,1,\ldots,N_t-1.
\label{eq:integrated-discrete-multisymplectic-law}
\end{align*}
% Consequently,
% \begin{align}
% \Omega_N(\mathbf Z_N^{k+1})
% =
% \Omega_N(\mathbf Z_N^k),
% \qquad
% k=0,\ldots,N_t-1.
% \label{eq:global-spectral-symplectic-law}
% \end{align}
% Thus the method preserves a spatially integrated discrete
% multi-symplectic structure.
\end{theorem}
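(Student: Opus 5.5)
The proof splits the step into its deterministic Runge--Kutta part and the additive stochastic part, and treats each separately.

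\emph{Stochastic part.} By \eqref{eq:exact-additive-stochastic-flow}, $\mathbf Z_N^{k+1}=\bar{\mathbf Z}_N^{k+1}+\mathcal B_N\Delta W_k$. The operator $\mathcal B_N$ and the increment $\Delta W_k$ do not depend on the phase variables. Hence $\mathbf d\mathbf Z_N^{k+1}=\mathbf d\bar{\mathbf Z}_N^{k+1}$, and consequently $\Omega_N(\mathbf Z_N^{k+1})=\Omega_N(\bar{\mathbf Z}_N^{k+1})$ almost surely. It therefore suffices to prove the identity with $\mathbf Z_N^{k+1}$ replaced by $\bar{\mathbf Z}_N^{k+1}$, which is a purely deterministic statement about one step of \eqref{eq:multisymplectic-RK-stages}.

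\emph{Linearization.} Apply $\mathbf d$ to \eqref{eq:multisymplectic-RK-stages}. The stage solution is locally unique and smooth in $\mathbf Z_N^k$ by the implicit function theorem argument above, so the variations $\xi_i:=\mathbf d\mathbf Z_{N,i}^k$, $\eta_i:=\mathbf d\mathbf Q_{N,i}^k$ and $\xi^k:=\mathbf d\mathbf Z_N^k$ satisfy
\begin{align*}
\xi_i=\xi^k+\tau\sum_j a_{ij}\eta_j,\qquad M\eta_i+K\partial_x\xi_i=P_NS_1''(\mathbf Z_{N,i}^k)\xi_i,\qquad \bar\xi^{k+1}=\xi^k+\tau\sum_i b_i\eta_i .
\end{align*}

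\emph{Symplectic RK computation.} Expand $\bar\xi^{k+1}\wedge M\bar\xi^{k+1}-\xi^k\wedge M\xi^k$. Using the skew-symmetry of $M$, this equals
\begin{align*}
\tau\sum_i b_i\bigl(\xi^k\wedge M\eta_i+\eta_i\wedge M\xi^k\bigr)+\tau^2\sum_{i,j}b_ib_j\,\eta_i\wedge M\eta_j .
\end{align*}
Substitute $\xi^k=\xi_i-\tau\sum_j a_{ij}\eta_j$. By \eqref{eq:RK-symplectic-condition}, all $\tau^2$ terms cancel, leaving $2\tau\sum_i b_i\,\xi_i\wedge M\eta_i$. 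Now integrate over $\mathbb T_L$ and use the stage equation $M\eta_i=-K\partial_x\xi_i+P_NS_1''\xi_i$.

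\emph{Main obstacle.} The Hessian term requires the most care. Pointwise, $\xi_i\wedge P_NS_1''\xi_i$ need not vanish because of the projection. After integration, however, $\int\xi_i\wedge P_N(S_1''\xi_i)\,dx=\int P_N\xi_i\wedge S_1''\xi_i\,dx=\int\xi_i\wedge S_1''\xi_i\,dx$, since $P_N$ is $L^2$-self-adjoint and $\xi_i\in S_N^5$. This last integral vanishes because $S_1''$ is symmetric and the wedge product is antisymmetric. The verification that $\xi_i$ lies in $S_N$ is where the Fourier--Galerkin structure is used.

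\emph{Flux term.} For the remaining term, the symmetry $K^\top=-K$ gives $\xi_i\wedge K\partial_x\xi_i=\partial_x\bigl(\tfrac12\xi_i\wedge K\xi_i\bigr)=\partial_x\mathcal K_N(\mathbf Z_{N,i}^k)$. Collecting terms and dividing by $2$ yields
\begin{align*}
\Omega_N(\bar{\mathbf Z}_N^{k+1})-\Omega_N(\mathbf Z_N^k)=-\tau\sum_i b_i\int_{\mathbb T_L}\partial_x\mathcal K_N(\mathbf Z_{N,i}^k)\,dx .
\end{align*}
Combining this with the stochastic step gives the claim.
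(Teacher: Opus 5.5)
Your proposal is correct and follows the paper's proof. The additive substep leaves $\Omega_N$ unchanged because $\mathbf d\mathbf Z_N^{k+1}=\mathbf d\bar{\mathbf Z}_N^{k+1}$. For the Runge--Kutta substep you linearize the stage equations, cancel the $\tau^2$ terms via \eqref{eq:RK-symplectic-condition}, and use $\xi_i\wedge K\partial_x\xi_i=\partial_x\mathcal K_N(\mathbf Z_{N,i}^k)$. Your handling of the projected Hessian term is more careful than the paper's, which asserts $\mathbf d\mathbf Z_{N,i}^k\wedge P_N[\nabla_Z^2S_1\,\mathbf d\mathbf Z_{N,i}^k]=0$ pointwise; as you note, this holds only after integration over $\mathbb T_L$, using the $L^2$-self-adjointness of $P_N$ and $\xi_i\in S_N^5$, which is all the integrated law needs.
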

\begin{proof}
%We first consider the deterministic Runge--Kutta substep.
Taking the exterior derivative of the stage equations gives
\[
M\mathbf d\mathbf Q_{N,i}^k
+
K\partial_x\mathbf d\mathbf Z_{N,i}^k
=
P_N
\left[
\nabla_Z^2S_1(\mathbf Z_{N,i}^k)
\mathbf d\mathbf Z_{N,i}^k
\right].
\]
Since the Runge--Kutta coefficients satisfy the symplecticity
condition, %the standard symplectic Runge--Kutta argument yields
$$
\Omega_N(\bar{\mathbf Z}_N^{k+1})
-
\Omega_N(\mathbf Z_N^k)
=
\tau
\sum_{i=1}^s b_i
\int_{\mathbb T_L}
\mathbf d\mathbf Z_{N,i}^k
\wedge
M\mathbf d\mathbf Q_{N,i}^k\,dx .
$$
Because
$\nabla_Z^2S_1$ is symmetric and $P_N$ is an
orthogonal projection, we obtain
\begin{align*}
\mathbf d\mathbf Z_{N,i}^k
\wedge
M\mathbf d\mathbf Q_{N,i}^k
={}&
-
\mathbf d\mathbf Z_{N,i}^k
\wedge
K\partial_x\mathbf d\mathbf Z_{N,i}^k
+
\mathbf d\mathbf Z_{N,i}^k
\wedge
P_N
\left[
\nabla_Z^2S_1(\mathbf Z_{N,i}^k)
\mathbf d\mathbf Z_{N,i}^k
\right]\\
={}&
-
\mathbf d\mathbf Z_{N,i}^k
\wedge
K\partial_x\mathbf d\mathbf Z_{N,i}^k.
\end{align*}
Since $K^\top=-K$, $\mathbf d\mathbf Z_{N,i}^k
\wedge
K\partial_x\mathbf d\mathbf Z_{N,i}^k
=
\partial_x
\mathcal K_N(\mathbf Z_{N,i}^k).$
Therefore,
\begin{align}
\Omega_N(\bar{\mathbf Z}_N^{k+1})
-
\Omega_N(\mathbf Z_N^k)
+
\tau
\sum_{i=1}^s b_i
\int_{\mathbb T_L}
\partial_x
\mathcal K_N(\mathbf Z_{N,i}^k)\,dx
=
0 .
\label{eq:deterministic_discrete_multisymplectic}
\end{align}
Furthermore, for every fixed realization $\omega$, the increment
$\Delta W_k(\omega)$ is independent of the phase variables, which implies 
% $\mathbf d(\mathcal B_N\Delta W_k)=0,$
% and therefore 
$\mathbf d\mathbf Z_N^{k+1}
=
\mathbf d\bar{\mathbf Z}_N^{k+1}.$
Consequently, $\Omega_N(\mathbf Z_N^{k+1})
=
\Omega_N(\bar{\mathbf Z}_N^{k+1}),$
which together with
\eqref{eq:deterministic_discrete_multisymplectic}
proves the result.
\end{proof}

\subsection{Discrete Riccati mechanism}
\label{sec:fully_discrete_wave_breaking}
We next derive a discrete analogue of the characteristic-slope
Riccati mechanism.
In addition to \eqref{eq:RK-symplectic-condition}, we assume
that $b_i\ge0$ for $i=1,\ldots,s$.
Fix $x_0\in\mathbb T_L$ and set $X_N^0=x_0$. 
For each time step, consider the characteristic flow by applying
the same Runge--Kutta coefficients to the stage velocity fields
$U_{N,i}^k$. 
Thus, the internal characteristic stages and the
endpoint are 
\begin{align}
&X_{N,i}^k
=
X_N^k
+
\tau\sum_{j=1}^s
a_{ij}U_{N,j}^k(X_{N,j}^k),
\qquad i=1,\ldots,s,
\label{eq:discrete-characteristic-stages}\\
&X_N^{k+1}
=
X_N^k
+
\tau\sum_{i=1}^s
b_iU_{N,i}^k(X_{N,i}^k).
\label{eq:discrete-characteristic-update}
\end{align}

Assume that $u_0 \in H_0^3(\mathbb{T}_L)$ and $\Lambda \Phi \in L_2(U; H_0^3(\mathbb{T}_L)).$ 
For fixed $N$ and fixed stage functions $U_{N,i}^k$,
% the system \eqref{eq:discrete-characteristic-stages} is finite-dimensional.
since the Jacobian with respect to the stage variables is the identity
at $\tau=0$, the implicit function theorem yields a locally unique
solution of the system \eqref{eq:discrete-characteristic-stages} for sufficiently small $\tau$. 
As $U_{N,i}^k$ are $\mathcal F_{t_k}$-measurable, so are
$X_{N,i}^k$ and $X_N^{k+1}$. 
\iffalse
For fixed $N$ and fixed deterministic-stage functions
$U_{N,i}^k$, the system
\eqref{eq:discrete-characteristic-stages} is finite dimensional and
has a locally unique solution near $X_N^k$ for sufficiently small
$\tau$, by the implicit function theorem applied at $\tau=0$.
Since the deterministic stage values
are $\mathcal F_{t_k}$-measurable, the corresponding characteristic
stages and the endpoint in
\eqref{eq:discrete-characteristic-update} are also
$\mathcal F_{t_k}$-measurable. 
\fi
Define
$Y_N^k
:=
\partial_xU_N^k(X_N^k),$  
$V_{N,i}^k
:=
\partial_xU_{N,i}^k(X_{N,i}^k),$ 
and set 
$y_N^k=\mathbb E[Y_N^k]$ with $y_N^0=\partial_xP_Nu_0(x_0)$ and
$\eta_N^k=\sum\limits_{i=1}^s b_i\mathbb E[V_{N,i}^k]$.
For convenience, let
$h_{N,i}^k(x)
:=
(U_{N,i}^k(x))^2
+
\frac12(\partial_xU_{N,i}^k(x))^2.$  
From
\eqref{eq:deterministic-stage-slope}, the commutativity of $P_N$,
$\partial_x$, and $\Lambda$, and the identity
$\partial_x^2\Lambda=\Lambda-I$, we obtain 
$\partial_xQ_{N,i}^{u,k}
=
-\partial_xP_N
\left(
U_{N,i}^k\partial_xU_{N,i}^k
\right)
-
\partial_x^2\Lambda P_Nh_{N,i}^k.$ 
Adding
$U_{N,i}^k\partial_x^2U_{N,i}^k$ to both sides gives
\begin{align}
\partial_xQ_{N,i}^{u,k}
+
U_{N,i}^k\partial_x^2U_{N,i}^k
={}&
-\frac12(\partial_xU_{N,i}^k)^2
+
(U_{N,i}^k)^2
-
\Lambda h_{N,i}^k
+
\mathcal R_{N,i}^k,
\label{eq:fully-discrete-stage-slope-identity}
\end{align}
where the remainder  
$\mathcal R_{N,i}^k
=
\partial_x(I-P_N)
\left(
U_{N,i}^k\partial_xU_{N,i}^k
\right)
+
(\Lambda-I)(I-P_N)h_{N,i}^k.$ 
From  
$\|\partial_x(I-P_N)f\|_{L^\infty}
\le
CN^{-1/2}\|f\|_{H^2}$
and 
$\|(\Lambda-I)(I-P_N)g\|_{L^\infty}
\le
CN^{-1/2}\|g\|_{H^2},$ it follows that 
\begin{align}
\|\mathcal R_{N,i}^k\|_{L^\infty}
\le
CN^{-1/2}\|U_{N,i}^k\|_{H^3}^2.
\label{eq:fully-discrete-projection-remainder-estimate}
\end{align}
Making use of  
\begin{align*}
\partial_x\bar U_N^{k+1}(X_N^{k+1})
&=\partial_xU_N^k(X_N^{k+1})\\
&\quad+\tau\sum_{i=1}^s
b_i\partial_xQ_{N,i}^{u,k}(X_N^{k+1}),\\
\bar Y_N^{k+1}&=\partial_x\bar U_N^{k+1}(X_N^{k+1}),
\end{align*}
we deduce
\begin{align}
\bar Y_N^{k+1}-Y_N^k
={}&
\tau\sum_{i=1}^s b_i
\left[
\partial_xQ_{N,i}^{u,k}
+
U_{N,i}^k\partial_x^2U_{N,i}^k
\right](X_{N,i}^k)
+
\mathcal T_N^k,
\label{eq:fully-discrete-slope-balance}
\end{align}
with
% where the {\color{red}characteristic discretization remainder} is given exactly by
\begin{align}
\mathcal T_N^k
={}&
\partial_xU_N^k(X_N^{k+1})
-
\partial_xU_N^k(X_N^k)
-
\tau\sum_{i=1}^s b_i
U_{N,i}^k(X_{N,i}^k)
\partial_x^2U_{N,i}^k(X_{N,i}^k)
\nonumber\\
&+
\tau\sum_{i=1}^s b_i
\left[
\partial_xQ_{N,i}^{u,k}(X_N^{k+1})
-
\partial_xQ_{N,i}^{u,k}(X_{N,i}^k)
\right].
\label{eq:fully-discrete-temporal-remainder}
\end{align}
Furthermore, since
\begin{align}
Y_N^{k+1}
=
\bar Y_N^{k+1}
+
\bigl(\partial_xG_N\Delta W_k\bigr)(X_N^{k+1}),
\label{eq:fully-discrete-stochastic-slope-update}
\end{align}
and $X_N^{k+1}$ is $\mathcal F_{t_k}$-measurable, we derive
$\mathbb E
\left[
\bigl(\partial_xG_N\Delta W_k\bigr)(X_N^{k+1})
\bigm|
\mathcal F_{t_k}
\right]
=0.$
%The pointwise evaluation is well defined since $\partial_xG_N\in \mathcal L_2(\mathcal U;H^2(\mathbb T_L))$ and $H^2(\mathbb T_L)\hookrightarrow L^\infty(\mathbb T_L)$.

The following lemma estimates the consistency errors introduced by
the Runge--Kutta approximation of the characteristic slope.
% The condition $\tau N^{1/2}\le1$ is imposed to balance the temporal discretization error and the Fourier inverse inequality.}
\begin{lemma}
Suppose that $\tau N^{1/2}\le1$ and, for some $M_T>0$ independent of
$N$ and $\tau$,
\begin{align}
\sup_{0\le k\le N_t}
\mathbb E\|U_N^k\|_{H^3}^4
+
\sup_{0\le k<N_t}\sup_{1\le i\le s}
\mathbb E\|U_{N,i}^k\|_{H^3}^4
\le
M_T.
\label{eq:fully-discrete-uniform-H3-assumption}
\end{align}
Then there exists $C_T>0$, independent of $N$, $\tau$, and $k$, such
that
\begin{align}
&\mathbb E|V_{N,i}^k-Y_N^k|
\le
C_T\tau,
\label{eq:stage-slope-consistency}\\
&\mathbb E|\mathcal T_N^k|
\le
C_T\tau^2N^{1/2}.
\label{eq:temporal-remainder-estimate}
\end{align}
\end{lemma}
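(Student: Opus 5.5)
The plan is to reduce everything to Sobolev embeddings on $\mathbb T_L$ plus the Fourier inverse inequality on $S_N$, and then to take expectations using the uniform fourth-moment bound \eqref{eq:fully-discrete-uniform-H3-assumption}. The basic tools are the embeddings $H^3\hookrightarrow W^{2,\infty}$ and $H^2\hookrightarrow W^{1,\infty}$. In addition, for $f\in S_N$ we use the inverse inequality $\|\partial_x^3 f\|_{L^\infty}\le CN^{1/2}\|f\|_{H^3}$, which comes from Cauchy--Schwarz on the Fourier series.

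\textbf{First estimate.} I would begin by bounding the characteristic stage displacements. From \eqref{eq:discrete-characteristic-stages} and \eqref{eq:discrete-characteristic-update},
\[
|X_{N,i}^k-X_N^k|+|X_N^{k+1}-X_N^k|\le C\tau\max_j\|U_{N,j}^k\|_{L^\infty}\le C\tau\max_j\|U_{N,j}^k\|_{H^3},
\]
where distances are measured on $\mathbb T_L$ and $C$ depends on $\sum|a_{ij}|$ and $\sum b_i$. Next, I decompose
\[
V_{N,i}^k-Y_N^k=\bigl[\partial_xU_{N,i}^k-\partial_xU_N^k\bigr](X_{N,i}^k)+\bigl[\partial_xU_N^k(X_{N,i}^k)-\partial_xU_N^k(X_N^k)\bigr].
\]
For the first bracket, the stage relation $U_{N,i}^k-U_N^k=\tau\sum_j a_{ij}Q_{N,j}^{u,k}$ together with \eqref{eq:deterministic-stage-slope} gives $\|\partial_xQ_{N,j}^{u,k}\|_{L^\infty}\le C\|Q\|_{H^2}\le C\|U_{N,j}^k\|_{H^3}^2$. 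This uses that $H^2$ is an algebra, that $\partial_x\Lambda$ has order $-1$, and that $P_N$ is bounded on $H^r$; no inverse inequality is needed. For the second bracket, the mean value theorem gives the bound $\|\partial_x^2U_N^k\|_{L^\infty}\,|X_{N,i}^k-X_N^k|\le C\tau\|U_N^k\|_{H^3}\max_j\|U_{N,j}^k\|_{H^3}$. Taking expectations and applying Cauchy--Schwarz/H\"older with \eqref{eq:fully-discrete-uniform-H3-assumption} then yields \eqref{eq:stage-slope-consistency} with $C_T\sim M_T^{1/2}$.

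\textbf{Second estimate.} For \eqref{eq:temporal-remainder-estimate} I split $\mathcal T_N^k=\mathcal T_1+\mathcal T_2$ according to the two lines of \eqref{eq:fully-discrete-temporal-remainder}.

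For $\mathcal T_2$, the mean value theorem gives $|\partial_xQ(X_N^{k+1})-\partial_xQ(X_{N,i}^k)|\le\|\partial_x^2Q_{N,i}^{u,k}\|_{L^\infty}\,C\tau\max_j\|U_{N,j}^k\|_{H^3}$. Since $Q_{N,i}^{u,k}\in S_N$, the inverse inequality gives $\|\partial_x^2Q\|_{L^\infty}\le CN^{1/2}\|Q\|_{H^2}\le CN^{1/2}\|U_{N,i}^k\|_{H^3}^2$. Hence $|\mathcal T_2|\le C\tau^2N^{1/2}\max\|U\|_{H^3}^3$. The expectation of this cubic quantity is controlled by the fourth moments through H\"older, using $\mathbb E\|U\|^3\le(\mathbb E\|U\|^4)^{3/4}$.

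For $\mathcal T_1$, I Taylor expand $g:=\partial_xU_N^k$ about $X_N^k$:
\[
g(X_N^{k+1})-g(X_N^k)=g'(X_N^k)\,\delta+O\bigl(\|g''\|_{L^\infty}\delta^2\bigr),\qquad \delta=\tau\sum_ib_iU_{N,i}^k(X_{N,i}^k).
\]
Subtracting $\tau\sum b_iU_{N,i}^k(X_{N,i}^k)\,\partial_x^2U_{N,i}^k(X_{N,i}^k)$ leaves two kinds of terms. The first is $\tau\sum_ib_iU_{N,i}^k(X_{N,i}^k)\bigl[\partial_x^2U_N^k(X_N^k)-\partial_x^2U_{N,i}^k(X_{N,i}^k)\bigr]$, which I split like the first estimate. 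The difference of fields is $\tau\sum_ja_{ij}\partial_x^2Q_{N,j}^{u,k}$, bounded by $C\tau N^{1/2}\|U\|_{H^3}^2$ via the inverse inequality. The spatial shift of $\partial_x^2U_{N,i}^k$ costs $\|\partial_x^3U_{N,i}^k\|_{L^\infty}\,C\tau\|U\|_{H^3}\le C\tau N^{1/2}\|U\|_{H^3}^2$, again by the inverse inequality. The second kind is the quadratic Taylor term, bounded by $C\tau^2N^{1/2}\|U\|_{H^3}^3$. Combining, $|\mathcal T_1|\le C\tau^2N^{1/2}\bigl(\max\|U\|_{H^3}^3+\max\|U\|_{H^3}^4\bigr)$ with the maxima over the current step and its stages, whose expectation is bounded by $C M_T$.

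\textbf{Main obstacle.} The delicate step is tracking exactly where one derivative beyond what $H^3$ controls in $L^\infty$ is needed: the third derivative of $U$ and the second derivative of $Q$. Each such occurrence must be paid for with exactly one factor $N^{1/2}$ from the inverse inequality, never more, and must come with a $\tau^2$. The stated hypothesis $\tau N^{1/2}\le1$ then gives uniformity and allows any higher-order products that might appear to be absorbed back into the $C_T\tau^2N^{1/2}$ bound. A second point to check is that all moment products stay within fourth order, so that \eqref{eq:fully-discrete-uniform-H3-assumption} suffices via H\"older.
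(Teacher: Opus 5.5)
Your proposal is correct and follows essentially the same route as the paper. For $V_{N,i}^k-Y_N^k$ you use the same two-term splitting, bounded via $H^2\hookrightarrow W^{1,\infty}$ and $\|Q_{N,j}^{u,k}\|_{H^2}\le C\|U_{N,j}^k\|_{H^3}^2$. For $\mathcal T_N^k$ you pay exactly one factor $N^{1/2}$, via the inverse inequality on $S_N$, for each occurrence of $\partial_x^3U$ or $\partial_x^2Q_{N,i}^{u,k}$, as the paper does.

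The only differences are cosmetic. You Taylor-expand $\partial_xU_N^k$ to second order about $X_N^k$, whereas the paper uses the integral mean-value form and compares with $\partial_x^2U_N^k(X_{N,i}^k)$. Every resulting term is in fact cubic in the $H^3$ norms, so your quartic term is unnecessary but harmless. As in the paper, the hypothesis $\tau N^{1/2}\le 1$ is not actually used in this lemma.
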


\begin{proof}
The stage relations and
\eqref{eq:discrete-characteristic-stages} imply
\begin{align*}
\|U_{N,i}^k-U_N^k\|_{H^2}
\leq&
\tau\sum_{j=1}^s
|a_{ij}|\,\|Q_{N,j}^{u,k}\|_{H^2}\le
C\tau\|U_{N,j}^k\|_{H^3}^2,\\
|X_{N,i}^k-X_N^k|
\leq&
\tau\sum_{j=1}^s
|a_{ij}|\,\|U_{N,j}^k\|_{L^\infty}.
\end{align*}
% {\color{red}
% By the boundedness of the Fourier projection in Sobolev spaces
% and the product estimate in one dimension, $\|Q_{N,j}^{u,k}\|_{H^2}
% \le
% C\|U_{N,j}^k\|_{H^3}^2.$Therefore,}
Using the decomposition
\[
\begin{aligned}
V_{N,i}^k-Y_N^k
=
(\partial_xU_{N,i}^k-\partial_xU_N^k)(X_{N,i}^k)
+
\partial_xU_N^k(X_{N,i}^k)
-\partial_xU_N^k(X_N^k),
\end{aligned}
\]
and by the Sobolev embedding $H^1(\mathbb T_L)\hookrightarrow
L^\infty(\mathbb T_L),$  we obtain
\begin{align*}
|V_{N,i}^k-Y_N^k|
\leq{}&
\|\partial_x(U_{N,i}^k-U_N^k)\|_{L^\infty}
+
\|\partial_x^2U_N^k\|_{L^\infty}
|X_{N,i}^k-X_N^k|.
\end{align*}
Taking expectations and using
\eqref{eq:fully-discrete-uniform-H3-assumption} proves
\eqref{eq:stage-slope-consistency}. 

Set
$\delta_N^k
:=
X_N^{k+1}-X_N^k.$ 
We obtain $\delta_N^k
=
\tau\sum\limits_{i=1}^s
b_iU_{N,i}^k(X_{N,i}^k)$, and 
\begin{align*}
\partial_xU_N^k(X_N^{k+1})
-
\partial_xU_N^k(X_N^k)
% =&
% \delta_N^k
% \int_0^1
% \partial_x^2U_N^k
% \left(
% X_N^k+\theta\delta_N^k
% \right)d\theta\\
=&
\tau\sum_{i=1}^s
b_iU_{N,i}^k(X_{N,i}^k)
\int_0^1
\partial_x^2U_N^k
\left(
X_N^k+\theta\delta_N^k
\right)d\theta.
\end{align*}
As a consequence,
\begin{align*}
&\partial_xU_N^k(X_N^{k+1})
-
\partial_xU_N^k(X_N^k)-
\tau\sum_{i=1}^s
b_iU_{N,i}^k(X_{N,i}^k)
\partial_x^2U_{N,i}^k(X_{N,i}^k)
\\
=&
\tau\sum_{i=1}^s
b_iU_{N,i}^k(X_{N,i}^k)
\left[
\int_0^1
\partial_x^2U_N^k
\left(
X_N^k+\theta\delta_N^k
\right)d\theta
-
\partial_x^2U_{N,i}^k(X_{N,i}^k)
\right].
\end{align*}
Making use of
$\partial_x^2U_N^k(X_{N,i}^k)$ gives
\begin{align*}
&\int_0^1
\partial_x^2U_N^k
\left(
X_N^k+\theta\delta_N^k
\right)d\theta
-
\partial_x^2U_{N,i}^k(X_{N,i}^k)
\\
=&
\int_0^1
\left[
\partial_x^2U_N^k
\left(
X_N^k+\theta\delta_N^k
\right)
-
\partial_x^2U_N^k(X_{N,i}^k)
\right]d\theta
+
\partial_x^2U_N^k(X_{N,i}^k)
-
\partial_x^2U_{N,i}^k(X_{N,i}^k),
\end{align*}
where
\begin{align*}
&\left|
\partial_x^2U_N^k(X_{N,i}^k)
-
\partial_x^2U_{N,i}^k(X_{N,i}^k)
\right|
\leq
\left\|
\partial_x^2
\left(
U_N^k-U_{N,i}^k
\right)
\right\|_{L^\infty},\\
&\left|
\partial_x^2U_N^k
\left(
X_N^k+\theta\delta_N^k
\right)
-
\partial_x^2U_N^k(X_{N,i}^k)
\right|
\leq
\|\partial_x^3U_N^k\|_{L^\infty}
\left|
X_N^k+\theta\delta_N^k-X_{N,i}^k
\right|.
\end{align*}
Since $0\leq\theta\leq1$, we derive 
$\left|
X_N^k+\theta\delta_N^k-X_{N,i}^k
\right|
\leq
|X_{N,i}^k-X_N^k|
+
|\delta_N^k|.$ 
Consequently,
\begin{align*}
&\left|
\int_0^1
\partial_x^2U_N^k
\left(
X_N^k+\theta\delta_N^k
\right)d\theta
-
\partial_x^2U_{N,i}^k(X_{N,i}^k)
\right|\\
\leq&
\left\|
\partial_x^2
\left(
U_N^k-U_{N,i}^k
\right)
\right\|_{L^\infty}
+
\|\partial_x^3U_N^k\|_{L^\infty}
\left(
|\delta_N^k|
+
|X_{N,i}^k-X_N^k|
\right).
\end{align*}
Furthermore, we arrive at $\|\partial_x^2(U_N^k-U_{N,i}^k)\|_{L^\infty}
\leq
C\tau N^{1/2}
\sum_{j=1}^s
|a_{ij}|\,\|Q_{N,j}^{u,k}\|_{H^2},$
\begin{align*}
&\|\partial_x^3U_N^k\|_{L^\infty}
\leq
CN^{1/2}\|U_N^k\|_{H^3},\quad 
\|\partial_x^2Q_{N,i}^{u,k}\|_{L^\infty}
\leq
CN^{1/2}\|Q_{N,i}^{u,k}\|_{H^2},\\
&\left|
\partial_xQ_{N,i}^{u,k}(X_N^{k+1})
-
\partial_xQ_{N,i}^{u,k}(X_{N,i}^k)
\right|\leq
\|\partial_x^2Q_{N,i}^{u,k}\|_{L^\infty}
|X_N^{k+1}-X_{N,i}^k|.
\end{align*}
Let
$A_N^k:=\|U_N^k\|_{H^3}$ and
$A_{N,i}^k:=\|U_{N,i}^k\|_{H^3}$. Then 
$|X_{N,i}^k-X_N^k|
\leq
C\tau\sum\limits_{j=1}^s|a_{ij}|A_{N,j}^k,$ 
$|\delta_N^k|
\leq
C\tau\sum\limits_{j=1}^s b_jA_{N,j}^k,$ and
$|X_N^{k+1}-X_{N,i}^k|
\leq
C\tau\sum\limits_{j=1}^s
(b_j+|a_{ij}|)A_{N,j}^k.$  
Substituting these bounds into
\eqref{eq:fully-discrete-temporal-remainder} yields
\begin{align*}
\mathbb E|\mathcal T_N^k|
\leq{}
C\tau^2N^{1/2}
\mathbb E
\Bigg[&
\sum_{i=1}^s b_iA_{N,i}^k
\Bigg(
\sum_{j=1}^s|a_{ij}|(A_{N,j}^k)^2
+
A_N^k
\sum_{j=1}^s
(b_j+|a_{ij}|)A_{N,j}^k
\Bigg)\\
&+
\sum_{i=1}^s b_i(A_{N,i}^k)^2
\sum_{j=1}^s
(b_j+|a_{ij}|)A_{N,j}^k
\Bigg].
\end{align*}
All terms on the right-hand side contain at most three factors of the
$H^3$ norms. For example, H\"older's inequality gives
\begin{align*}
\mathbb E[A_{N,i}^k(A_{N,j}^k)^2]
&\leq
\left(
\mathbb E(A_{N,i}^k)^2
\right)^{1/2}
\left(
\mathbb E(A_{N,j}^k)^4
\right)^{1/2},\\
\mathbb E[A_{N,i}^kA_N^kA_{N,j}^k]
&\leq
\left(
\mathbb E(A_{N,i}^k)^4
\right)^{1/4}
\left(
\mathbb E(A_N^k)^4
\right)^{1/4}
\left(
\mathbb E(A_{N,j}^k)^2
\right)^{1/2}.
\end{align*}
The remaining products are estimated in the same way. Therefore,
\eqref{eq:fully-discrete-uniform-H3-assumption} implies
$\mathbb E|\mathcal T_N^k|
\leq
C_T\tau^2N^{1/2},$
which proves \eqref{eq:temporal-remainder-estimate}.
\end{proof}

Let $C_0$ denote the norm of the Sobolev embedding
$H^1(\mathbb T_L)\hookrightarrow L^\infty(\mathbb T_L).$ Define the stage energy constant by 
$C_{2,\mathrm{st}}(T)
:=
2C_{0}^2
\sup\limits_{0\le k<N_t}
\sum\limits_{i=1}^s
b_i\mathbb E\|U_{N,i}^k\|_{H^1}^2.$
By the uniform regularity estimate
\eqref{eq:fully-discrete-uniform-H3-assumption}, the stage energy
constant satisfies
\begin{align*}
C_{2,\mathrm{st}}(T)
\le 2C_{0}^2
\sup_{0\le k<N_t}
\sum_{i=1}^s
b_i
\left(
\mathbb E\|U_{N,i}^k\|_{H^3}^4
\right)^{1/2}
\le
2C_{0}^2M_T^{1/2}.
\end{align*}
Hence $C_{2,\mathrm{st}}(T)$ is bounded independently of $N$ and
$\tau$.

\begin{theorem}
\label{thm:fully_discrete_riccati}
Let $\tau$ be sufficiently small so that the locally unique stages
constructed above exist. Assume further that
$b_i\ge0$ for $i=1,\ldots,s$,
$\tau N^{1/2}\le1$, and
$\Lambda\Phi\in
\mathcal L_2(\mathcal U;H_0^3(\mathbb T_L))$.
Suppose that the uniform regularity estimate
\eqref{eq:fully-discrete-uniform-H3-assumption} holds on $[0,T]$.
Then there exists $C_T>0$, depending on $T$, $M_T$, and the
Runge--Kutta coefficients, but independent of $N$ and $\tau$, 
such
that
\begin{align}
\frac{y_N^{k+1}-y_N^k}{\tau}
&\le
-\frac12(y_N^k)^2
+
\frac12C_{N,\tau}(T),
\qquad
k=0,\ldots,N_t-1,
\label{eq:fully-discrete-riccati-inequality}
\end{align}
where 
$C_{N,\tau}(T)
:=
C_{2,\mathrm{st}}(T)
+
C_T
\left(
2N^{-1/2}
+
\tau
+
2\tau N^{1/2}
\right).$ 
Let $y_{N,\tau}$ be the continuous, piecewise linear interpolation of
$\{y_N^k\}_{k=0}^{N_t}$, namely, 
$y_{N,\tau}(t)
=
y_N^k
+
\frac{t-t_k}{\tau}
\left(
y_N^{k+1}-y_N^k
\right)$ 
for 
$t\in[t_k,t_{k+1}].$ 
Define 
$\widehat C_{N,\tau}(T)
:=
C_{N,\tau}(T)
+
C_T\tau.$
Then, for almost every $t\in(0,T)$,
\begin{align}
y_{N,\tau}'(t)
&\le
-\frac12y_{N,\tau}(t)^2
+
\frac12\widehat C_{N,\tau}(T).
\label{eq:interpolated-fully-discrete-riccati-inequality}
\end{align} 
If
$y_N^0<-\sqrt{\widehat C_{N,\tau}(T)}$, define 
$T_{N,\tau}^*
=
\frac{1}{\sqrt{\widehat C_{N,\tau}(T)}}
\log
\left(
\frac{
\sqrt{\widehat C_{N,\tau}(T)}-y_N^0
}{
-y_N^0-\sqrt{\widehat C_{N,\tau}(T)}
}
\right).$ 
Then the uniform regularity estimate
\eqref{eq:fully-discrete-uniform-H3-assumption} can hold on $[0,T]$
only if $T_{N,\tau}^*>T$.  %Equivalently, if
% $T_{N,\tau}^*\le T$, the $N$- and $\tau$-uniform bound
% \eqref{eq:fully-discrete-uniform-H3-assumption} must fail before time
% $T$.
\end{theorem}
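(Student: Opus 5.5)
The plan is to take expectations in the one-step slope balance \eqref{eq:fully-discrete-slope-balance} combined with the stochastic update \eqref{eq:fully-discrete-stochastic-slope-update}, then imitate the continuous argument of Theorem~\ref{thm:continuous_riccati}. Since $X_N^{k+1}$ is $\mathcal F_{t_k}$-measurable, the noise term has zero conditional mean. Hence
\[
y_N^{k+1}-y_N^k=\tau\sum_{i=1}^s b_i\,\mathbb E\Bigl[-\tfrac12 (V_{N,i}^k)^2+U_{N,i}^k(X_{N,i}^k)^2-\Lambda h_{N,i}^k(X_{N,i}^k)+\mathcal R_{N,i}^k(X_{N,i}^k)\Bigr]+\mathbb E\,\mathcal T_N^k ,
\]
where we have used the stage identity \eqref{eq:fully-discrete-stage-slope-identity}. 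The sign information enters as follows. First, $\Lambda h_{N,i}^k\ge0$ by positivity of the Green kernel $G_L$. Second, $U_{N,i}^k(X_{N,i}^k)^2\le C_0^2\|U_{N,i}^k\|_{H^1}^2$, so after weighting by $b_i\ge0$ and summing, these terms contribute at most $\frac12 C_{2,\mathrm{st}}(T)$. The remainders are controlled by \eqref{eq:fully-discrete-projection-remainder-estimate} together with $\mathbb E\|U_{N,i}^k\|_{H^3}^2\le M_T^{1/2}$, giving a term of size $CN^{-1/2}$, and by \eqref{eq:temporal-remainder-estimate}, giving a term of size $C\tau N^{1/2}$ after dividing by $\tau$.

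The quadratic term is handled by Jensen and convexity. Since $b_i\ge0$ and $\sum b_i=1$, we have $\sum_i b_i\mathbb E[(V_{N,i}^k)^2]\ge(\eta_N^k)^2$. To pass from $\eta_N^k$ to $y_N^k$, note that \eqref{eq:stage-slope-consistency} gives $|\eta_N^k-y_N^k|\le C_T\tau$. Then
\[
(\eta_N^k)^2\ge (y_N^k)^2-2|y_N^k|\,C_T\tau .
\]
The step I expect to be the main obstacle is that the cross term $|y_N^k|\tau$ is not obviously $O(\tau)$ uniformly. I plan to bound it using $|y_N^k|\le C\,\mathbb E\|U_N^k\|_{H^3}\le CM_T^{1/4}$, which follows from \eqref{eq:fully-discrete-uniform-H3-assumption}. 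With this, all errors fit into the constant $C_T(2N^{-1/2}+\tau+2\tau N^{1/2})$, after enlarging $C_T$, and \eqref{eq:fully-discrete-riccati-inequality} follows.

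For the interpolant, on $(t_k,t_{k+1})$ we have $y_{N,\tau}'=(y_N^{k+1}-y_N^k)/\tau\le-\frac12(y_N^k)^2+\frac12C_{N,\tau}$. We also have $|y_{N,\tau}(t)^2-(y_N^k)^2|\le 2\max(|y_N^k|,|y_N^{k+1}|)\,|y_N^{k+1}-y_N^k|$. Both $|y_N^k|$ and the increment bound $|y_N^{k+1}-y_N^k|\le C\tau$ come from the moment assumption, since the right side of the balance is bounded in expectation by $CM_T$-dependent constants. So the difference is at most $C_T\tau$, which yields \eqref{eq:interpolated-fully-discrete-riccati-inequality} with $\widehat C_{N,\tau}$.

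Finally, set $c=\sqrt{\widehat C_{N,\tau}(T)}$. I compare $y_{N,\tau}$ with the Riccati solution $z'=-\frac12z^2+\frac12c^2$, $z(0)=y_N^0<-c$, exactly as in the proof of Theorem~\ref{thm:continuous_riccati}. The difference $w=y_{N,\tau}-z$ satisfies $w'\le-\frac12(y_{N,\tau}+z)w$ with $w(0)=0$, and since $y_{N,\tau}$ is Lipschitz the integrating-factor argument gives $y_{N,\tau}\le z$ on $[0,T^*_{N,\tau})\cap[0,T]$. If $T^*_{N,\tau}\le T$, then $z\to-\infty$ as $t\uparrow T^*_{N,\tau}$. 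This contradicts the bound $\sup_k|y_N^k|\le CM_T^{1/4}$, which holds uniformly on $[0,T]$ under \eqref{eq:fully-discrete-uniform-H3-assumption}. Hence that assumption can hold on $[0,T]$ only if $T^*_{N,\tau}>T$.
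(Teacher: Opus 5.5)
Your proposal is correct and follows the paper's proof essentially step for step. Both arguments take expectations in the slope balance and kill the noise increment by $\mathcal F_{t_k}$-measurability of $X_N^{k+1}$. They then drop $\Lambda h_{N,i}^k\ge0$, bound the $U_{N,i}^k(X_{N,i}^k)^2$ terms by $\tfrac12 C_{2,\mathrm{st}}(T)$, and control the remainders. Next comes Jensen, with the passage from $\eta_N^k$ to $y_N^k$ via the stage-slope consistency estimate and the $M_T$-bound on $|y_N^k|$. Finally, the $O(\tau)$ increment bound handles the interpolant, followed by comparison with the explicit Riccati solution.

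The only differences are cosmetic. You use $(\eta_N^k)^2\ge (y_N^k)^2-2|y_N^k|\,|\eta_N^k-y_N^k|$, where the paper uses $|\eta_N^k-y_N^k|\,(|\eta_N^k|+|y_N^k|)$. You also give an explicit integrating-factor argument instead of citing the scalar comparison principle.
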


\begin{proof}
Substituting
\eqref{eq:fully-discrete-stage-slope-identity} into
\eqref{eq:fully-discrete-slope-balance}, using
\eqref{eq:fully-discrete-stochastic-slope-update}, and taking
expectations, we derive 
\begin{align*}
\frac{y_N^{k+1}-y_N^k}{\tau}
={}&
-\frac12
\sum_{i=1}^s
b_i\mathbb E[(V_{N,i}^k)^2]
+
\sum_{i=1}^s
b_i\mathbb E
\left[
(U_{N,i}^k(X_{N,i}^k))^2
\right]
\\
&-
\sum_{i=1}^s
b_i\mathbb E
\left[
(\Lambda h_{N,i}^k)(X_{N,i}^k)
\right]
+
\sum_{i=1}^s
b_i\mathbb E
\left[
\mathcal R_{N,i}^k(X_{N,i}^k)
\right]
+
\frac1\tau
\mathbb E[\mathcal T_N^k].
\end{align*}
Since $h_{N,i}^k\ge0$ and the periodic Green function of $\Lambda$ is
positive,
$\Lambda h_{N,i}^k\ge0$.
Moreover, the Sobolev embedding
$H^1(\mathbb T_L)\hookrightarrow L^\infty(\mathbb T_L)$ gives
\begin{align*}
\sum_{i=1}^s
b_i\mathbb E
\left[
(U_{N,i}^k(X_{N,i}^k))^2
\right]
&\le
C_{0}^2
\sum_{i=1}^s
b_i\mathbb E\|U_{N,i}^k\|_{H^1}^2\le
\frac12C_{2,\mathrm{st}}(T).
\end{align*}
By
\eqref{eq:fully-discrete-projection-remainder-estimate} and
\eqref{eq:temporal-remainder-estimate}, we obtain 
$\left|
\sum\limits_{i=1}^s
b_i\mathbb E
\left[
\mathcal R_{N,i}^k(X_{N,i}^k)
\right]
\right|
\le
C_TN^{-1/2},$
and 
$\frac1\tau
\left|
\mathbb E[\mathcal T_N^k]
\right|
\le
C_T\tau N^{1/2}.$ 
%Because $b_i\ge0$ and $\sum\limits_{i=1}^s b_i=1$, 
Based on 
Jensen's inequality, 
$\sum\limits_{i=1}^s
b_i\mathbb E[(V_{N,i}^k)^2]
\ge
\big(
\sum\limits_{i=1}^s
b_i\mathbb E[V_{N,i}^k]
\big)^2
=
(\eta_N^k)^2,$ 
and thus
\begin{align*}
\frac{y_N^{k+1}-y_N^k}{\tau}
\le
-\frac12(\eta_N^k)^2
+
\frac12C_{2,\mathrm{st}}(T)
+
C_T
\left(
N^{-1/2}
+
\tau N^{1/2}
\right).
\end{align*} 
By \eqref{eq:stage-slope-consistency},
we arrive at
$|\eta_N^k-y_N^k|
\le
\sum\limits_{i=1}^s
b_i
\mathbb E|V_{N,i}^k-Y_N^k|
\le
C_T\tau.$ 
The uniform regularity assumption and the Sobolev embedding also give 
$|\eta_N^k|+|y_N^k|
\le
C_T.$ 
Consequently,
\begin{align*}
-(\eta_N^k)^2
&\le
-(y_N^k)^2
+
|\eta_N^k-y_N^k|
\left(
|\eta_N^k|+|y_N^k|
\right)
\le
-(y_N^k)^2
+
C_T\tau.
\end{align*}
Combining the preceding estimates 
proves
\eqref{eq:fully-discrete-riccati-inequality}.

The slope balance, the uniform $H^3$ bound, and the
remainder estimates give 
$\frac{|y_N^{k+1}-y_N^k|}{\tau}
\le
C_T\bigl(1+N^{-1/2}+\tau N^{1/2}\bigr).$ 
Under $\tau N^{1/2}\le1$, it follows that
$|y_N^{k+1}-y_N^k|\le C_T\tau$. Hence, for
$t\in[t_k,t_{k+1}]$,
$|y_{N,\tau}(t)-y_N^k|\le C_T\tau$ and
$|y_{N,\tau}(t)^2-(y_N^k)^2|\le C_T\tau$.
Since $y_{N,\tau}'=(y_N^{k+1}-y_N^k)/\tau$ almost everywhere on $(t_k,t_{k+1})$, \eqref{eq:fully-discrete-riccati-inequality} yields 
$y_{N,\tau}'(t)
\le
-\frac12y_{N,\tau}(t)^2
+\frac12\widehat C_{N,\tau}(T),$ 
which proves \eqref{eq:interpolated-fully-discrete-riccati-inequality}. 
Let $z_{N,\tau}$ solve 
$z_{N,\tau}'
=
-\frac12z_{N,\tau}^2
+
\frac12\widehat C_{N,\tau}(T),$
with 
$z_{N,\tau}(0)=y_N^0.$ 
Since the vector field
$f(z)=-\frac12z^2+\frac12\widehat C_{N,\tau}(T)$
is locally Lipschitz, the scalar comparison principle implies 
$y_{N,\tau}(t)\le z_{N,\tau}(t)$ for all times at which
$z_{N,\tau}$ is finite. Set
$c_{N,\tau}=\sqrt{\widehat C_{N,\tau}(T)}$.
If $y_N^0<-c_{N,\tau}$, separation of variables gives 
$z_{N,\tau}(t)
=
-c_{N,\tau}
\frac{
(c_{N,\tau}-y_N^0)
+
(-c_{N,\tau}-y_N^0)e^{c_{N,\tau}t}
}{
(c_{N,\tau}-y_N^0)
-
(-c_{N,\tau}-y_N^0)e^{c_{N,\tau}t}
}.$ 
Its denominator vanishes at $T_{N,\tau}^*$, and $z_{N,\tau}(t)\to-\infty$ as $t\uparrow T_{N,\tau}^*$.
On the other hand,
\eqref{eq:fully-discrete-uniform-H3-assumption} implies 
$|y_{N,\tau}(t)|
\le
C
\sup\limits_{0\le k\le N_t}
\left(
\mathbb E\|U_N^k\|_{H^3}^2
\right)^{1/2}
\le
C_T.$ 
If $T_{N,\tau}^*\le T$, then
$z_{N,\tau}(t)\to-\infty$ as
$t\uparrow T_{N,\tau}^*$, whereas the uniform regularity assumption
keeps $y_{N,\tau}$ bounded. This contradicts
$y_{N,\tau}(t)\le z_{N,\tau}(t)$ for
$t<T_{N,\tau}^*$. Hence $T_{N,\tau}^*>T$. 
\end{proof}

Theorem \ref{thm:fully_discrete_riccati} shows that the fully discrete characteristic slope retains
the dominant Riccati-type instability of its continuous counterpart.
If the comparison time $T^*_{N,\tau}$ lies in $[0,T]$, the assumed
uniform $H^3$-regularity estimate cannot persist on the whole interval.
Thus, sufficiently negative discrete slopes create an obstruction to
uniform high-regularity control through the same negative quadratic
mechanism that drives continuous wave breaking.
Moreover, the additional terms in
the discrete slope inequality arise from spatial projection and
characteristic discretization and become negligible under mesh refinement
whenever 
$N^{-1/2}\to0$, 
$\tau\to0$, and
$\tau N^{1/2}\to0.$ 
This quantifies the consistency of the discrete Riccati mechanism with
its continuous counterpart.

\section{Numerical experiments}
\label{sec:numerics}
This section complements the theoretical analysis through numerical
experiments. 
We use two smooth zero-mean initial conditions for complementary
purposes: a trigonometric profile for temporal convergence and
averaged energy tests on a smooth pre-breaking interval, and a localized
pulse with a pronounced descending flank to visualize pathwise
slope steepening. We consider \(\mathbb T_{2\pi}=\mathbb R/(2\pi\mathbb Z)\), identified
with the computational interval \([0,2\pi)\).  Throughout, $N$ denotes the Fourier cutoff defining $S_N$, with $N_{\mathrm F}=2N+1$ Fourier modes and equally spaced grid points. The zero Fourier coefficient is kept at zero, consistent with the zero-mean setting.  For the first two tests,
we take $u_0(x)=0.4\sin x+0.2\sin(2x)-0.1\sin(3x).$
The noise operator is truncated to the first \(50\) nonzero Fourier modes
in each of the cosine and sine components and is defined by
$\Phi e_m^{\mathrm c}
=\varepsilon(1+\mu_m^2)^{-1}\sqrt{\frac{2}{L}}\cos(\mu_m x),
\Phi e_m^{\mathrm s}
=\varepsilon(1+\mu_m^2)^{-1}\sqrt{\frac{2}{L}}\sin(\mu_m x),$
where \(\mu_m=2\pi m/L\), \(m=1,\ldots,50\), and \(L=2\pi\). 
\iffalse
In this section, we verify the theoretical results via the numerical experiments. 
In detail, we consider the periodic domain $\mathbb T_{2\pi}=\mathbb R/(2\pi\mathbb Z),$
with $[0,2\pi)$ as a representative computational interval, and use
the smooth, zero-mean initial value
$u_0(x)=0.4\sin x+0.2\sin(2x)-0.1\sin(3x).$ 
In addition, we take
$\Phi e_m^{\mathrm c}
=\varepsilon(1+\mu_m^2)^{-1}\sqrt{2/L}\cos(\mu_mx)$ and
$\Phi e_m^{\mathrm s}
=\varepsilon(1+\mu_m^2)^{-1}\sqrt{2/L}\sin(\mu_mx)$ with $\mu_m=2\pi m/L$ and 
$m=1,\ldots,50$. 
\fi

% {\color{red}
% All convergence tests are performed on a time interval
% $[0,T]$ satisfying
% $T<T_b^{\mathrm{num}},$
% where $T_b^{\mathrm{num}}$ denotes the numerically observed
% wave breaking time. Thus, the reported errors characterize the
% standard discretization convergence in the regular regime and are
% not affected by the singular behavior associated with wave breaking.}
% \iffalse
% The convergence experiments are carried out on time intervals that remain
% strictly before the numerically observed loss of regularity, so that they
% measure ordinary refinement behavior rather than singular-time effects. 
% \fi
\begin{figure}[h]\centering\includegraphics[width=\linewidth]{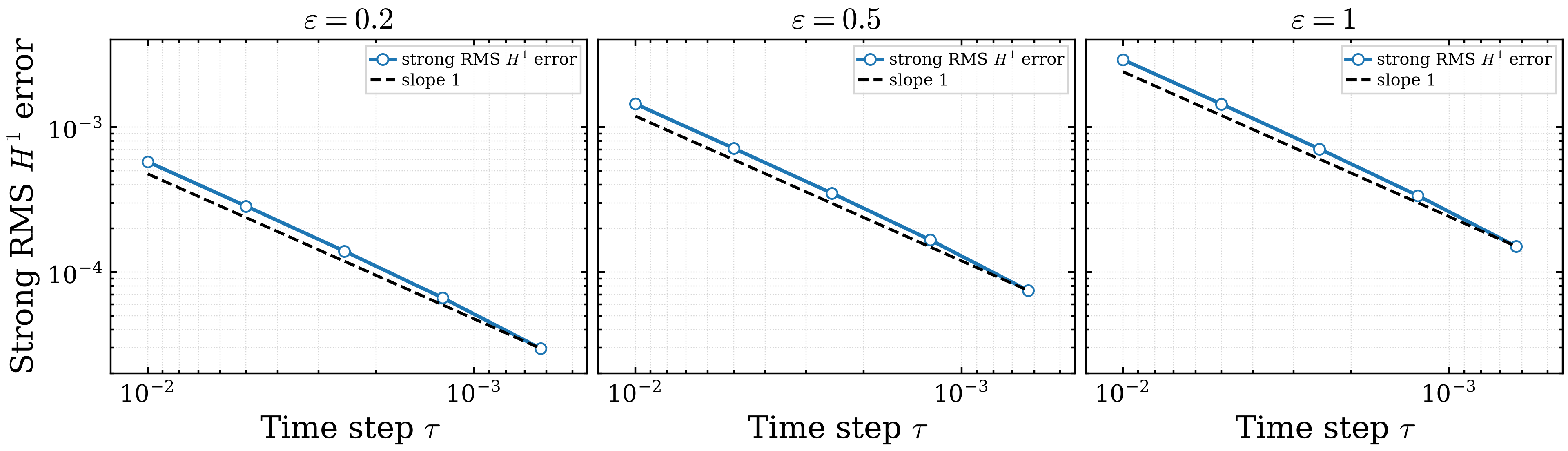}\caption{Strong temporal convergence error for \(\varepsilon=0.2,0.5,1\).}\label{fig:num-convergence}\end{figure}

Figure~\ref{fig:num-convergence} reports the reference-based strong temporal self-convergence for \(\varepsilon=0.2,0.5,1\) with \(N_{\mathrm F}=341\), corresponding to the Fourier cutoff \(N=170\). We take \(T=0.5\), \(\tau=10^{-2}2^{-j}\) for \(j=0,\ldots,4\), the reference time step \(\tau_{\rm ref}=1.5625\times10^{-4}\), and \(M=2000\) independent sample paths. With $U_{N,\tau_{\rm ref}}^{(\ell)}$ denoting the reference solution, the strong root-mean-square error is \(\mathcal E_{\tau}(T)=[M^{-1}\sum\limits_{\ell   =1}^M\|U_{N,\tau}^{(\ell)}(T)-U_{N,\rm ref}^{(\ell)}(T)\|_{H^1}^2]^{1/2}\). For each sample path, the coarse increments are sums of the corresponding reference increments. 
The least-squares slopes fitted to the five log--log data points are \(1.0658\), \(1.0656\), and \(1.0656\), respectively, so the observed temporal order is approximately one.

\iffalse
The computation is performed on
\(\mathbb T_{2\pi}=\mathbb R/(2\pi\mathbb Z)\), represented by
\([0,2\pi)\). 
\fi

% \begin{table}[t]
% \centering
% \caption{Numerical verification of the expected discrete energy growth law
% using \(2000\) sample paths for each noise intensity.}
% \label{tab:num-energy}
% {\scriptsize
% \renewcommand{\arraystretch}{1.08}
% \begin{tabular*}{\linewidth}
% {@{\extracolsep{\fill}}rrrrrr@{}}
% \toprule
% \(\varepsilon\)
% & \(k_{\mathrm{th}}\)
% & \(k_{\mathrm{fit}}\)
% & Rel. diff. (\%)
% & \(\overline{\Delta\mathcal E}_N(T)\)
% & \(k_{\mathrm{th}}T\) \\
% \midrule
% \(0.1\)
% & \(1.342961\times10^{-3}\)
% & \(1.359698\times10^{-3}\)
% & \(1.2462\)
% & \(6.800180\times10^{-2}\)
% & \(6.714806\times10^{-2}\) \\
% \(0.2\)
% & \(5.371845\times10^{-3}\)
% & \(5.361453\times10^{-3}\)
% & \(0.1935\)
% & \(2.675841\times10^{-1}\)
% & \(2.685922\times10^{-1}\) \\
% \(0.5\)
% & \(3.357403\times10^{-2}\)
% & \(3.326800\times10^{-2}\)
% & \(0.9115\)
% & \(1.667515\)
% & \(1.678702\) \\
% \(1\)
% & \(1.342961\times10^{-1}\)
% & \(1.338436\times10^{-1}\)
% & \(0.3370\)
% & \(6.694614\)
% & \(6.714806\) \\
% \bottomrule
% \end{tabular*}
% }
% \end{table}

\begin{figure}[t]
\centering
\includegraphics[width=1\linewidth]
{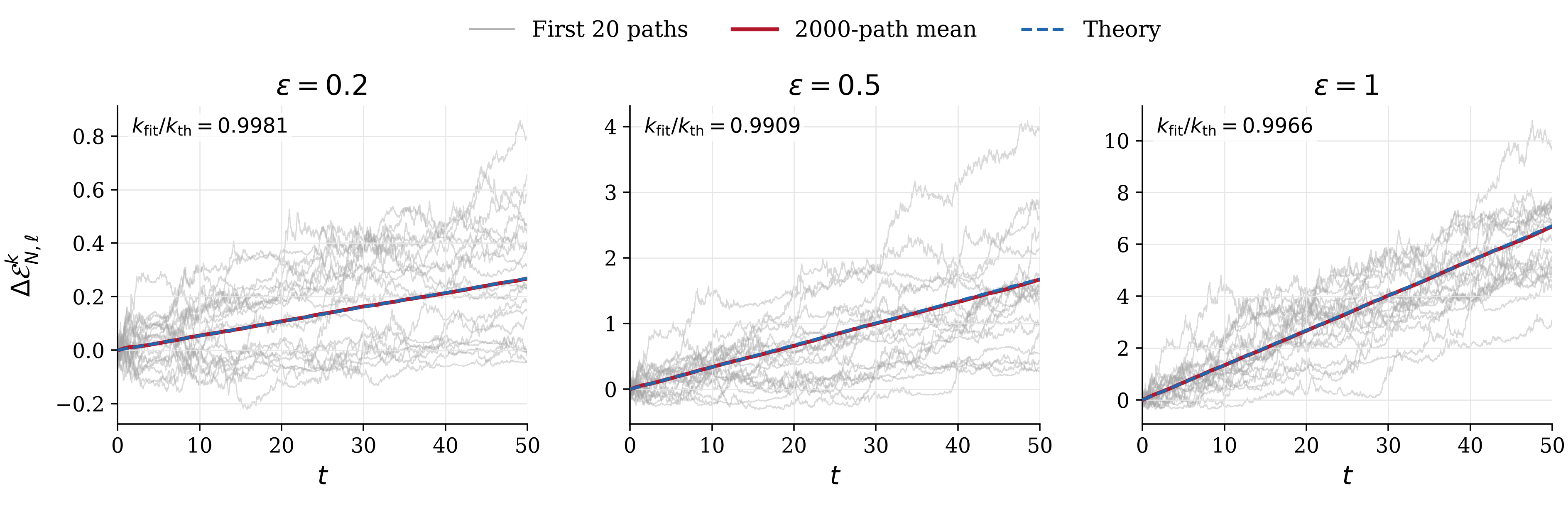}
\caption{The evolution law of discrete averaged energy increment for the fully discrete method.
% \(\varepsilon=0.1,0.2,0.5,1\). 
% The gray curves show the first \(20\)
% individual trajectories, the red curves are the sample means over all
% \(2000\) paths, and the blue dashed curves are the theoretical growth laws
% from \eqref{eq:num-energy-theoretical-slope}.
}
\label{fig:num-energy}
\end{figure}

% Table~\ref{tab:num-energy} and 
Figure~\ref{fig:num-energy} examines the discrete averaged energy
evolution.  For each noise intensity \(\varepsilon=0.2\), \(0.5\), and
\(1\), we compute the error 
$\Delta\mathcal E_N^k
=
\frac{1}{M}\sum\limits_{\ell=1}^{M}
\Big(
\mathcal E_N(U_{N,\ell}^k)-$
$
\mathcal E_N(U_{N,\ell}^0)
\Big).$
where \(N=170\), \(\tau=0.0025\), \(T=50\), and
\(M=2000\).  We fit
\(\Delta\mathcal E_N^k\approx k_{\mathrm{fit}}t_k\) by least squares
subject to zero intercept, where $k_{\mathrm{fit}}
=\sum\limits_{k=0}^{N_t}t_k\Delta\mathcal E_N^k/
\sum\limits_{k=0}^{N_t}t_k^2.$
The theoretical slope is
$k_{\mathrm{th}}
=
\varepsilon^2\sum\limits_{m=1}^{50}(1+\mu_m^2)^{-3}.$
Here, the power \((1+\mu_m^2)^{-3}\) results from the Helmholtz
operator in the velocity formulation together with the \(H^1\)-energy
weighting.  The ratios \(k_{\mathrm{fit}}/k_{\mathrm{th}}\) are
\(0.9981\), \(0.9909\), and \(0.9966\), respectively.  These results are
consistent with the predicted linear-in-time growth of the expected
discrete energy and its quadratic dependence on the noise intensity as shown in
Proposition~\ref{prop:fully_discrete_energy}.

\iffalse
Figure~\ref{fig:num-energy} shows the averaged
energy increment
${\Delta\mathcal E}_N^k
=M^{-1}\sum\limits_{\ell=1}^M
\bigl(\mathcal E_N(U_{N,\ell}^k)-\mathcal E_N(U_N^0)\bigr)$ 
as a function of time for the three noise intensities
$\varepsilon=0.2,0.5,1$.
For each noise intensity, we fit
${\Delta\mathcal E}_N^k\approx k_{\mathrm{fit}}t_k$
by a least-squares line constrained to pass through the origin, where
$k_{\mathrm{fit}}
=(\sum_k t_k{\Delta\mathcal E}_N^k)/(\sum_k t_k^2)$. 
The fitted slopes agree closely with the theoretical prediction
$k_{\mathrm{th}}
=\varepsilon^2\sum_{m=1}^{50}(1+\mu_m^2)^{-3},$ since 
the ratios $k_{\mathrm{fit}}/k_{\mathrm{th}}$ are
$0.9981$, $0.9909$, and $0.9966$, respectively.
%{\color{blue}The corresponding relative error is measured by $\mathrm{Err}_{\mathrm{rel}}=|k_{\mathrm{fit}}-k_{\mathrm{th}}|/k_{\mathrm{th}}\times100\%$.}
The close agreement between the fitted and theoretical slopes shows
that the numerical method reproduces both the linear growth of the
expected energy in time and the quadratic dependence of its growth rate
on the noise intensity. The computations use the initial data above
with $N=341$, $\tau=0.0025$, and $T=50$, and the Monte Carlo averages
are computed from $M=2000$ independent sample paths for each value of
$\varepsilon$.
\fi

\begin{figure}[!htbp] \centering \includegraphics[width=0.45\linewidth]{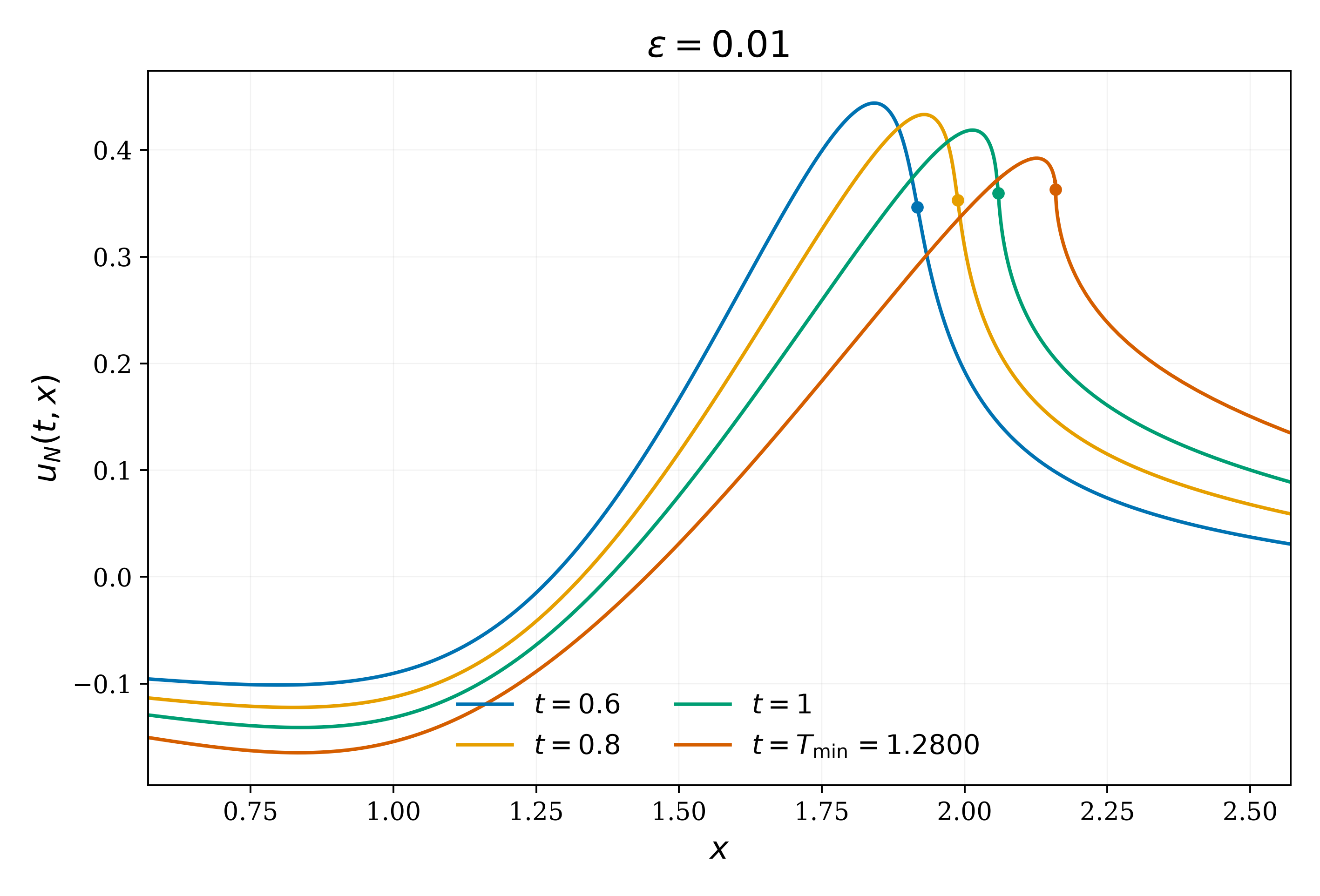}  \includegraphics[width=0.45\linewidth]{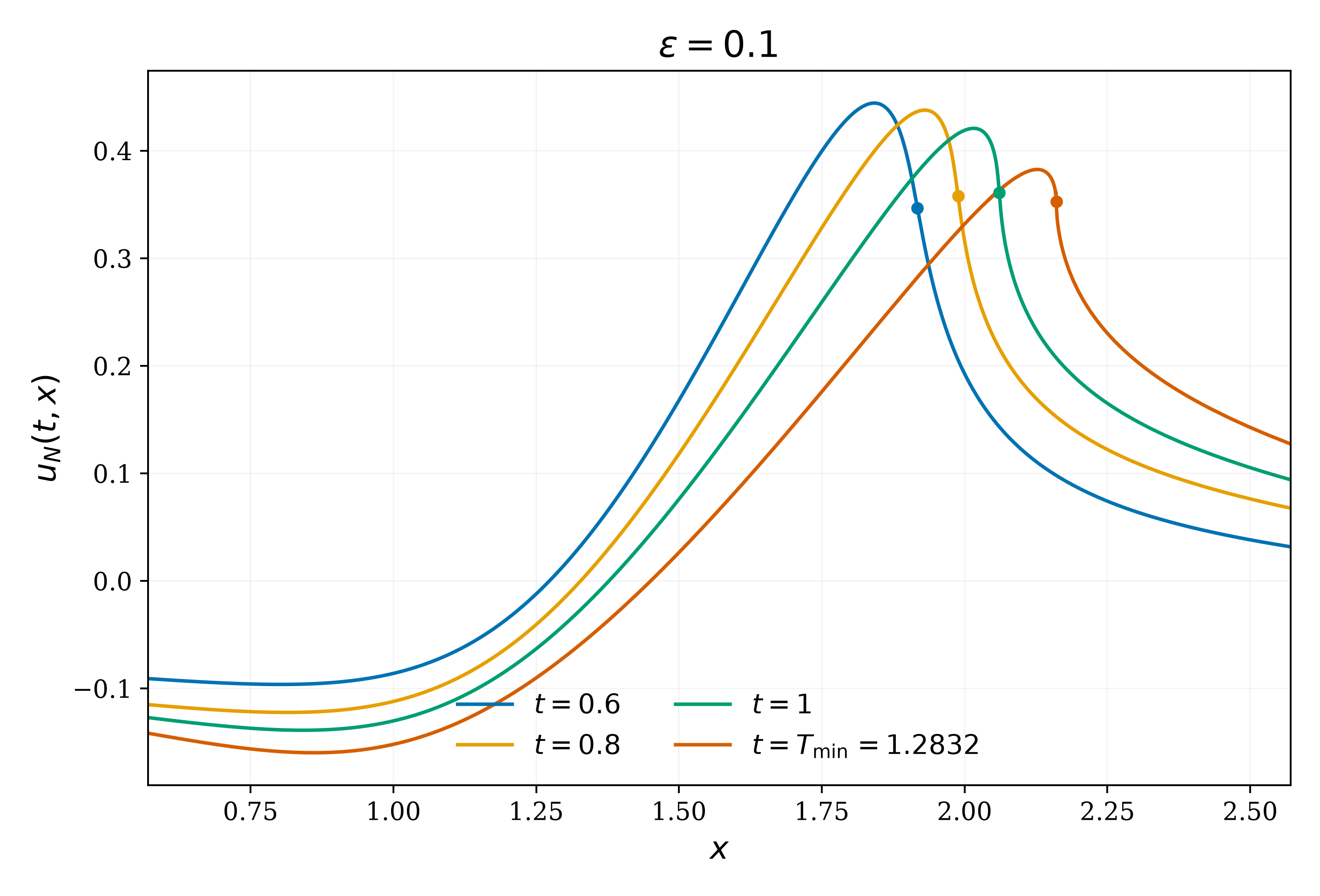}\\
\includegraphics[width=0.45\linewidth]{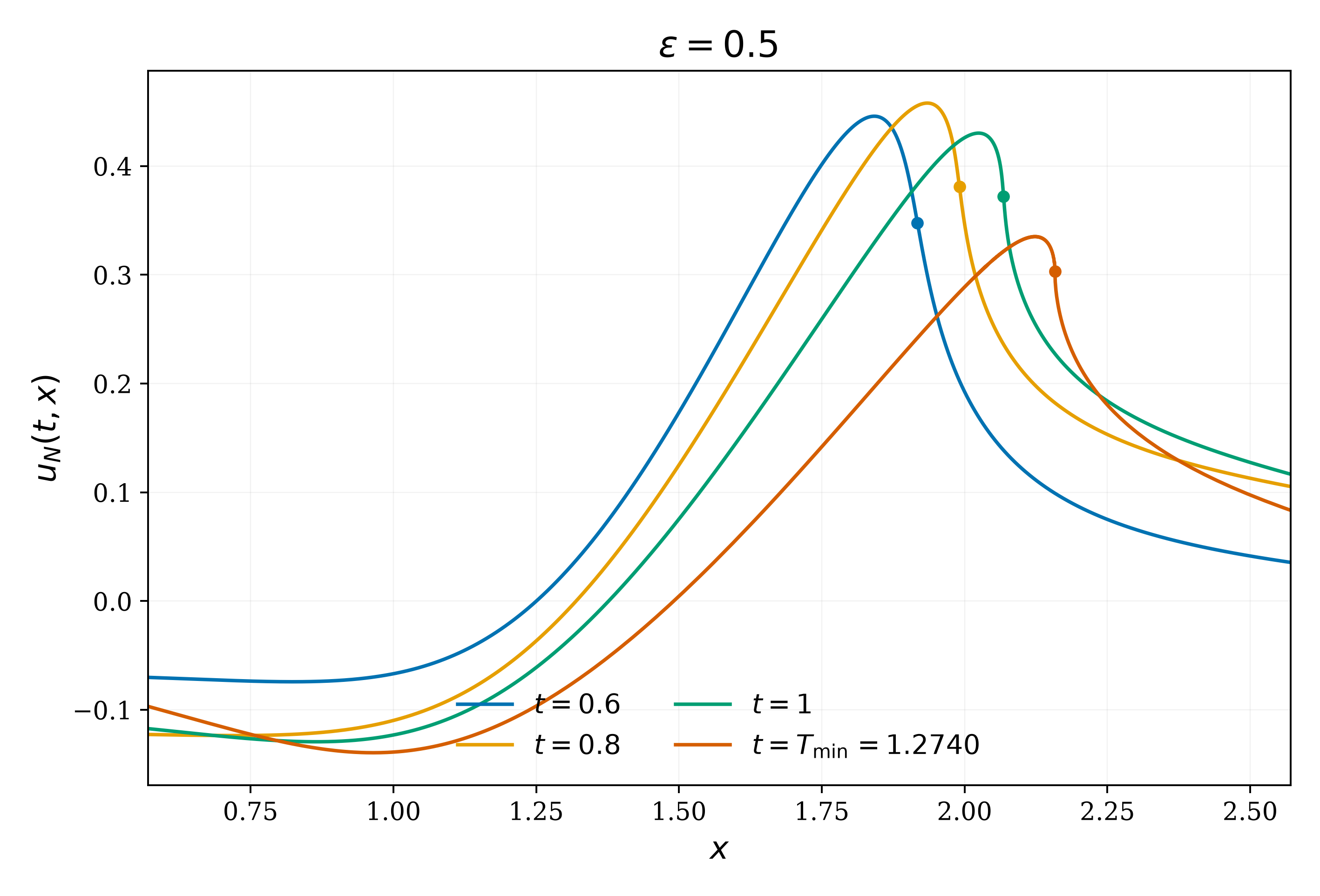}
\includegraphics[width=0.45\linewidth]{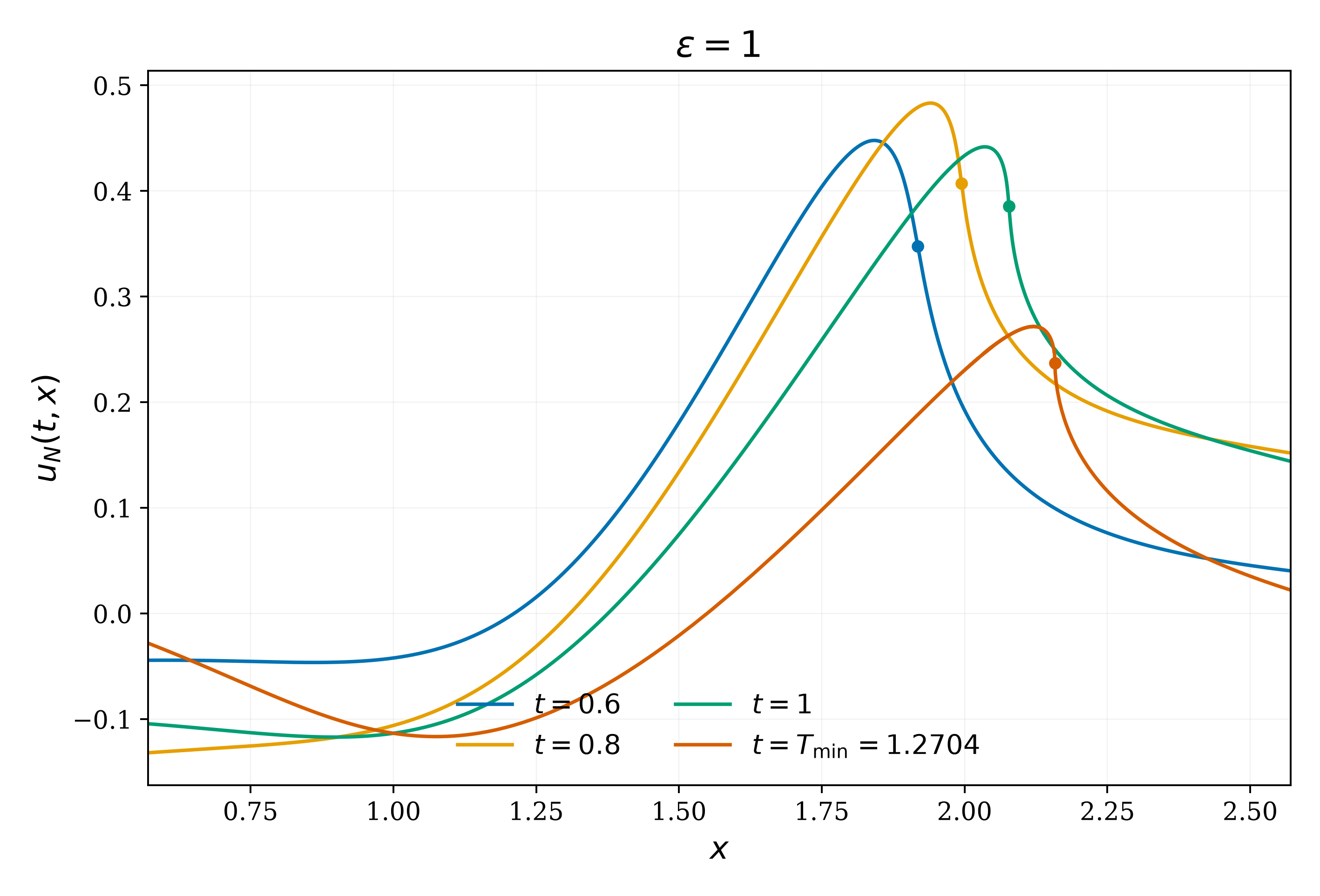} \caption{Pathwise numerical profiles for \(\varepsilon=0.01\), \(0.1\), \(0.5\), and \(1\). 
% In each panel, the profiles at \(t=0.6\), \(0.8\), \(1\), and \(T_{\min}\) are superposed in blue, orange, bluish green, and vermilion, respectively. The colored circular markers identify the points of most negative numerical slope within \(\pi/2-1\le x\le\pi/2+1\).
} \label{fig:num-wave-four-snapshot-profiles} \end{figure} 
\begin{figure}[!htbp] \centering \includegraphics[width=0.47\linewidth]{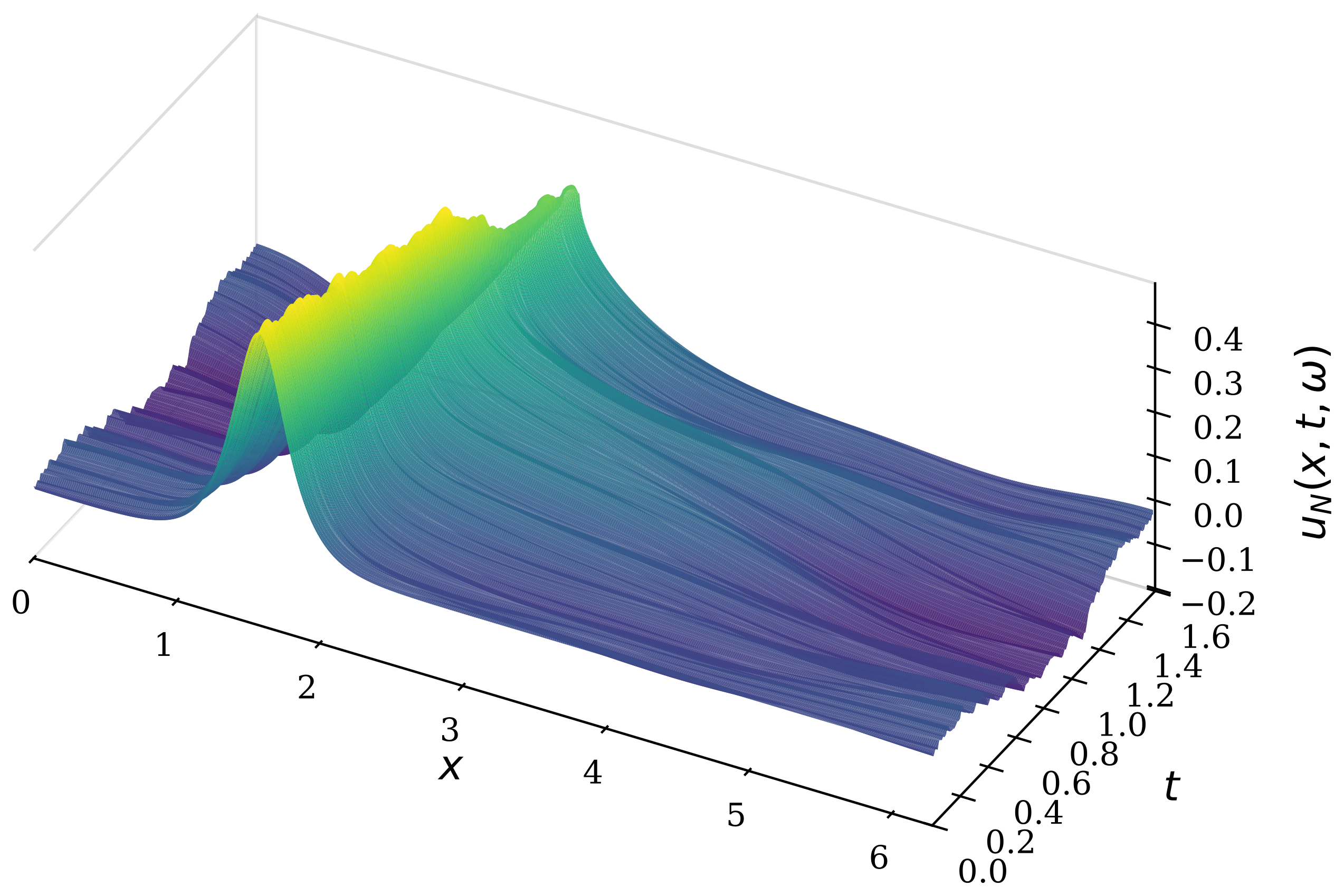}\includegraphics[width=0.47\linewidth]{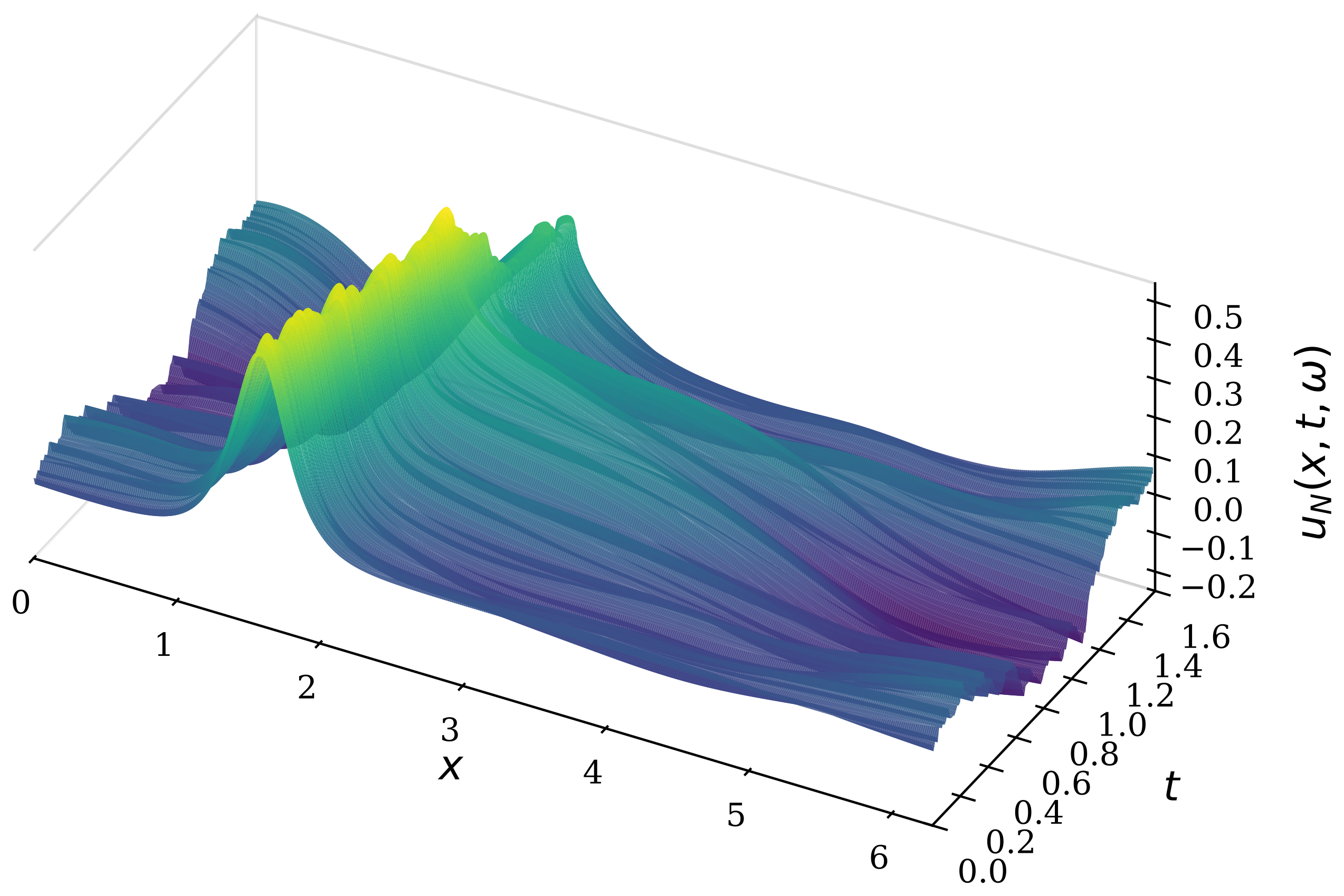} \caption{Pathwise space--time surfaces for \(\varepsilon=0.5\) (left) and \(\varepsilon=1\) (right).} \label{fig:num-wave-surfaces} \end{figure}

% \subsection{Numerical wave breaking signatures and slope amplification}
% We finally examine whether individual numerical trajectories exhibit the
% one-sided slope amplification associated with wave breaking. The analytical
% criterion established above has the form
% \begin{equation}
% \label{eq:num-wave-breaking-criterion}
%     \sup_{0\le t<\tau_\infty(\omega)}
%     \sup_{x\in\mathbb T_L}u_x(t,x,\omega)<\infty,
%     \qquad
%     \lim_{t\uparrow\tau_\infty(\omega)}
%     \inf_{x\in\mathbb T_L}u_x(t,x,\omega)=-\infty .
% \end{equation}
% {\color{red}Since a finite-dimensional Fourier approximation remains smooth and
% cannot develop an infinite slope, the following computations should be
% interpreted only as pathwise numerical evidence supporting the
% wave breaking mechanism in \eqref{eq:num-wave-breaking-criterion}, rather
% than as a numerical proof of wave breaking.}
\iffalse
A finite-dimensional Fourier approximation cannot attain an infinite slope.
The computations below therefore provide pathwise numerical evidence
consistent with \eqref{eq:num-wave-breaking-criterion}; they do not constitute
a numerical proof of blow-up.
\fi

To illustrate the pathwise steepening mechanism associated with the discrete
Riccati-type analysis in Theorem~\ref{thm:fully_discrete_riccati}, we
use the localized zero-mean pulse introduced above.
%, whose pronounced descending flank is chosen to favor early wave breaking.
On $\mathbb T_{2\pi}$, represented
by $[0,2\pi)$, we initialize the numerical solution by
$U_N^0(x_j)
=\frac12\operatorname{sech}((x_j-\pi/2)/(1/6))-c_{0,N_{\mathrm F}}$,
where $x_j=2\pi j/N_{\mathrm F}$ and
$c_{0,N_{\mathrm F}}
=N_{\mathrm F}^{-1}\sum\limits_{j=0}^{N_{\mathrm F}-1}
\frac12\operatorname{sech}((x_j-\pi/2)/(1/6)).$
We use $N=5000$, corresponding to $N_{\mathrm F}=10001$ grid points,
with $\tau=4\times10^{-4}$ and $T=1.6$.
%The noise truncation is the same as in the preceding experiments, with $50$ wavenumbers in each of the sine and cosine components.
%The simulations for and $1$ use the same realizations of all driving Brownian motions. 
Figure~\ref{fig:num-wave-four-snapshot-profiles} shows the numerical
profiles at $t=0.6,0.8,1$, and $T_{\min}$ for each noise intensity $\varepsilon=0.01,0.1,0.5$.
Here,
$m_N(t_k)
=
\min\limits_{0\leq j<N_{\mathrm F}}
\partial_x U_N^k(x_j), t_k=k\tau,$
is the minimum slope sampled on the spatial grid, and $T_{\min}$
is the earliest grid time at which $m_N(t_k)$ attains its minimum
over $0\leq t_k\leq T$. %{\color{red} Thus, $T_{\min}$ identifies the most negative sampled slope within the simulation window; it is not identified with the theoretical wave-breaking time.} 
The displayed profiles show pronounced sharpening of the descending
front without a comparable increase in wave amplitude.
For the larger noise intensities, the profiles exhibit more visible
fluctuations, while localized steepening remains apparent.
The space--time surfaces in
Figure~\ref{fig:num-wave-surfaces} for $\varepsilon=0.5$ and $1$
provide a complementary view of this evolution.
Together, these observations provide qualitative numerical evidence
of negative-slope amplification consistent with the discrete
Riccati-type mechanism, rather than a numerical verification of
finite-time slope blow-up.

\section*{Acknowledgments}
This work is supported by MOST National Key R\&D Program No. 2024FA1015900, the National Natural Science Foundation of China (No. 12471386 and
No. 12671464). 

% The bibliography is generated only from citations in the active text.
% The former broad \nocite{...} list is intentionally disabled so that
% uncited background references do not appear in the final bibliography.

\bibliographystyle{plain}
\bibliography{references}
\end{document}